\theoremstyle{definition}
\newtheorem{definition}{Definition}
\theoremstyle{plain}
\newtheorem{theorem}{Theorem}
\newtheorem{corollary}{Corollary}
\newtheorem{lem}{Lemma}
\newtheorem{prop}{Proposition}
\newtheorem{ejem}{Example}
\theoremstyle{remark}
\newtheorem{remark}{Remark}
\DeclareMathOperator{\CC}{\mathbb C}
\DeclareMathOperator{\RR}{\mathbb R}
\DeclareMathOperator{\ZZ}{\mathbb Z}
\DeclareMathOperator{\NN}{\mathbb N}
\DeclareMathOperator{\Lim}{Lim}
\DeclareMathOperator{\essinf}{ess\:inf}
\DeclareMathOperator{\essup}{ess\:sup}
\def\limsup{\mathop{\overline{\rm lim}}}
\title{Inverses of moment hermitian matrices.}
\author{C. Escribano} 
\address{C. Escribano: Departamento de Matem\'atica Aplicada, Facultad de Inform\'atica de Madrid\\
         Universidad Polit\'ec\-ni\-ca, Campus de Montegancedo\\
         Boadilla del Monte, 28660 , Madrid, Spain\\ Phone +34 913367419, Fax +34 913367426}
\email{cescribano@fi.upm.es}
\author{R. Gonzalo} 
\address{R. Gonzalo: Departamento de Matem\'atica Aplicada, Facultad de Inform\'atica de Madrid\\
         Universidad Polit\'ec\-ni\-ca, Campus de Montegancedo\\
         Boadilla del Monte, 28660, Madrid, Spain\\ Phone +34 913367419, Fax +34 913367426}
\email{rngonzalo@fi.upm.es}
\author{E. Torrano} 
\address{E. Torrano: Departamento de Matem\'atica Aplicada, Facultad de Inform\'atica de Madrid\\
         Universidad Polit\'ec\-ni\-ca, Campus de Montegancedo\\
         Boadilla del Monte, 28660, Madrid, Spain\\ Phone +34 913367419, Fax +34 913367426}
\email{emilio@fi.upm.es}
\begin{document}

\begin{abstract} Motivated by \cite{Berg-Szwarc} we study the existence of the inverse of infinite Hermitian moment matrices associated with
measures with support on the complex plane.  We relate this problem to  the asymptotic behaviour of the smallest eigenvalues of
 finite sections  and we study it from the point of view of  infinite transition matrices associated to the orthogonal polynomials.  For  Toeplitz matrices we introduce the notion of weakly asymptotic Toeplitz matrix and we show that, under certain assumptions, the inverse of
a Toeplitz moment matrix is weakly asymptotic Toeplitz. Such  inverses are computed in terms of some limits of the coefficients of the associated orthogonal polynomials. We finally show that the asymptotic behaviour of the smallest eigenvalue of a moment Toeplitz matrix only depends on the absolutely part of the associated measure.

\end{abstract}

\maketitle
\begin{quotation} {\sc {\footnotesize Keywords}}. {\small Hermitian moment problem,  orthogonal polynomials,
smallest eigenvalue, measures, inverses of infinite matrices}
\end{quotation}

\smallskip

\begin{quotation} {\it  MSC 2010 Subject Classification 44A60, 15A29}
\end{quotation}
\section{Introduction}

  Let $\mathbf{M}=(c_{i,j})_{i,j=0}^{\infty}$ be an infinite Hermitian matrix, i.e., $c_{i,j}= \overline{c_{j,i}}$ for all
$i,j$ non-negative integers. Following \cite{EGT} we say that $\mathbf{M}$ is positive definite (in short, an HPD matrix) if
$\vert \mathbf{M}_n \vert >0$ for all $n\geq 0$, where $\mathbf{M}_n$ is the truncated matrix of size $(n+1)\times (n+1)$ of $\mathbf{M}$. An
HPD matrix defines an inner product $\langle \;,\; \rangle$ in the space $\mathbb{P}[z]$ of all polynomials with complex coefficientes in the following way:
if  $p(z)=\sum_{k=0}^{n}v_kz^k$ y
$q(z)=\sum_{k=0}^{m}w_kz^k$ then
\begin{equation} \label{inner}
\langle p(z),q(z)\rangle=v\mathbf{M}w^{*},
\end{equation}
 being $v=(v_0,\dots,v_n,0,0, \dots), w=(w_0,\dots,w_m,0,0, \dots) \in c_{00}$ where $c_{00}$ is the space
of all complex sequences with only finitely many non-zero entries. In the case of an HPD matrix $\mathbf{M}$ being a moment matrix
associated with a measure $\mu$, i.e., whenever there exists a measure $\mu$ with support on $\CC$ such that for all $i,j\geq 0$
$$
c_{i,j}= \int z^{i} \overline{z}^jd\mu(z),
$$
 we denote $\mathbf{M}=\mathbf{M}(\mu)$. Note that in this case
the inner product induced by $\mathbf{M}(\mu)$ is the inner product in $L^{2}(\mu)$:
$$
\langle p(z),q(z)\rangle =v\mathbf{M}(\mu)w^{*}= \int p(z)\overline{q(z)}d\mu.   \label{inner}
$$

 For more information concerning the characterization of HPD matrices which are
moment matrices with respect to a certain measure $\mu$ with support on $\CC$ see among others  \cite{Atzmon},\cite{Berg-Maserick} and  \cite{Szafraniec}.

 An HPD matrix $\mathbf{M}$ is the Gram matrix of the inner product \ref{inner} in the vector space $\mathbb{P}[z]$ with respect
to $\{z^n\}_{n=0}^{\infty}$, i.e., $\mathbf{M}=(\langle z^i,z^j\rangle)_{i,j=0}^{\infty}$. Let $\{P_n(z)\}_{n=0}^{\infty}$ denote the sequence of orthonormal polynomials induced by such inner product,
uniquely determined by the requirements that
$$
P_n(z) = \sum_{k=0}^{n}
b_{k,n}z^k,
$$
 with positive leading coefficient $b_{n,n}>0$ and satisfying the orthonormality condition. In the case of $\mathbf{M}(\mu)$ being a moment matrix associated with a certain measure $\mu$ then $\{P_n(z)\}_{n=0}^{\infty}$ is the sequence of orthonormal polynomials with respect to $\mu$.

 We denote by $\lambda_n$ the smallest eigenvalue of $\mathbf{M}_n$. It is easy to check that the sequence
$\{\lambda_n\}_{n=0}^{\infty}$ is a non increasing positive sequence and therefore $\lim_{n\to \infty} \lambda_n$ exists.
In the case of a moment matrix $\mathbf{M}(\mu)$ we denote
$\lambda_n =\lambda_n(\mu)$.

 For positive definite
Hankel matrices, which are moment matrices associated with positive measures on $\RR$,  Berg, Chen and Ismail \cite{Berg} have proved that a measure $\mu$ on $\RR$ is {\it determinate}, meaning that $\mu$ is the only measure with real support having the same
moments as $\mu$, if and only if $\lim_{n\to \infty} \lambda_n(\mu) =0$. In this direction, in the case of  HPD moment matrices $\mathbf{M}(\mu)$ it is proved in \cite{EGT}
that $\lim_{n\to \infty} \lambda_n(\mu) =0$ is a necessary condition to assure the density of polynomials in the space $L^{2}(\mu)$ when compactly supported measures are considered, although it is not a sufficient condition.

 For an HPD matrix $\mathbf{M}$ let $\mathbf{B}=(b_{k,n})_{k,n=0}^{\infty}$ denote the infinite upper triangular matrix, i.e., $b_{k,n}=0$ whenever $k>n$, given by the coefficients of the orthonormal polynomials with respect to the inner product \ref{inner} induced by $\mathbf{M}
$. In \cite{Berg-Szwarc} the authors state that it is likely that, for Hankel matrices $\mathbf{H}$ in the indeterminate case
(i.e. whenever $\lim_{n\to \infty} \lambda_n >0$ as a consequence of \cite{Berg}), the identity
$\mathbf{H}_n \mathbf{A}_n= \mathbf{I}_n$ where $\mathbf{A}_n=\mathbf{B}_n\mathbf{B}_n^{*}$ extends to the infinite case in the sense that if $\mathbf{A}=\mathbf{B}\mathbf{B}^{*}$,
$$
\mathbf{H}\mathbf{A}=  \mathbf{A} \mathbf{H}=\mathbf{I}.
$$
 Motivated by this problem and taking in mind that for $\mathbf{M}$ being an HPD matrix it is verified  that
$\mathbf{M}_{n}^{-1}=\overline{\mathbf{B}_n}\mathbf{B}_n^{t}$ as it is proved in
\cite{Berg-Szwarc}, \cite{EGT} we here study the following problem:

\noindent
 {\bf Problem 1.} Let $\mathbf{M}(\mu)$ be an HPD moment matrix associated with a measure $\mu$ with support on $\CC$. Is the matrix $\mathbf{A}(\mu)= \overline{\mathbf{B}}\mathbf{B}^{t} $, whenever such formal matrix product exists, a classical inverse of the matrix $\mathbf{M}(\mu)$ in the sense that
$$
\mathbf{A}(\mu) \mathbf{M}(\mu) =\mathbf{M}(\mu) \mathbf{A}(\mu) =\mathbf{I}?
$$
 Note that in \cite{Sivakumar} it is showed that an infinite matrix could have several classical inverses; we here are
interested in the inversion of moment matrices in terms of the coefficients of the orthogonal polynomials given by the infinite matrix $\mathbf{B}$. We point out that the approach of determining the inverse of a finite Hankel or Toeplitz matrix in connection with the theory of orthogonal polynomials appears in \cite{Trench-I} and
\cite{Trench-II} where an algorithm for the inversion of such matrices is obtained. Later on, this point of view is also treated in
\cite{Provost} for certain finite moment Hankel matrices; in the finite dimensional case this kind of algorithms enables to compute inverse matrices faster.

 The paper is organized as follows: in Setion I we show that the answer of  Problem $1$ is negative in general. Moreover, we analyze the existence of  $\mathbf{A}(\mu)$ whenever $\lim_{n\to \infty} \lambda_n(\mu)>0$. In this direction we show that $\lim_{n\to \infty} \lambda_n(\mu)>0$ is a sufficient condition to assure the existence of $\mathbf{A}(\mu)$ but not to assure
that $\mathbf{A}(\mu)$ is a classical inverse of $\mathbf{M}(\mu)$. We also provide several examples in order to show that
the condition $\lim_{n\to \infty} \lambda_n(\mu)>0$ is not necessary to assure that $\mathbf{A}(\mu)$ exists and
is a classical inverse matrix of $\mathbf{M}(\mu)$.

 Section II is devoted to study the problem of the existence of the classical inverse of an HPD Toeplitz matrix
in terms of the coefficients of the orthnormal polynomials. It is well known that
the inverse of a Toeplitz matrix is not, in general, a Toeplitz matrix, even in the finite dimensional case. Nevertheless, the inverse of a finite
Toeplitz matrix is {\it persymmetric} as it is proved in \cite{zohar}. Recall
that a matrix $\mathbf{A}=(a_{i,j})_{i,j=0}^{n}$ is persymmetric  (see \cite{zohar}) when for every $0 \leq i,j \leq n$
$$
a_{i,j}= a_{n-j,n-i}.
$$
 In other words, a matrix is persymmetric if it has symmetry about its cross diagonal (the diagonal extended from the
upper right corner to the lower left corner). We show that this property has a great impact in the form of a classical
inverse of certain infinite Toeplitz matrices. In order to do it we introduce the notion of {\it weakly asymptotic Toeplitz matrix} very closely
related to the notion of weakly asymptotic Toeplitz operator that appears in \cite{Barria-Halmos}; indeed, in the particular case of matrices defining bounded operators in Hilbert spaces, these matrices are the representations of such operators with respect to orthonormal
basis. In this direction we show that under certain assumptions the classical inverse of an HPD Toeplitz matrix is a weakly asymptotic Toeplitz matrix. Moreover,
we give the description of a classical inverse of a Toeplitz matrix in terms of the limits of their diagonals.
In all of the examples of HPD moment Toeplitz matrices considered we compute the associated matrix  $\mathbf{B}$.

 It is well known (see e.g. \cite{aki}) that every HPD Toeplitz matrix $\mathbf{T}$ is indeed the moment matrix $\mathbf{T}(\nu)$ for a certain measure $\nu$ with support on $\mathbb{T}$. Applying our general results to such matrices we obtain some interesting
consequences for measures $\nu$ supported  on the unit circle $\mathbb{T}$. In particular, we show that whenever  $\lim_{n\to \infty} \lambda_n(\nu)>0$  we may assure the existence of all the limits $\lim_{n\to \infty} b_{n-k,n}(\nu)$ for every $k\geq 0$ being
  $P_n(z)=\sum_{k=0}^{n} b_{k,n}(\nu)z^k$ the orthogonal polynomials associated with $\nu$. Note that in the case
of the main coefficients the existence of  $\lim_{n\to \infty} b_{n,n}(\nu)$ was already known by
Szeg\"{o} theory.

 Finally, we consider HPD Toeplitz matrices from the point of view of
 representations of bounded operators on the Hardy-Hilbert space $\mathbf{H}^{2}$. The problem of the inverse of bounded Toeplitz operator $\mathbf{T}_{\varphi}$ associated with a symbol $\varphi$ has been widely studied (see e.g. \cite{Bottcher}). We apply our techniques to the inversion of  Toeplitz operators not necessarily bounded. As a consequence we obtain that the asymptotic limit of the inverse of a Toeplitz matrix $\mathbf{T}_{\varphi}$
associated with a continuous symbol verifying $\inf_{z\in \mathbb{T}} \varphi(z) >0$ is the Toeplitz matrix $\mathbf{T}_{\frac{1}{\varphi}}$.

 Last section is devoted to the study of the asymptotic behaviour of the smallest eigenvalues
of the absolutely part of a measure with support on $\mathbb{T}$ and its consequences in the inversion of moment Toeplitz matrices. In \cite{EGT} it is proved that the $n$ large asymptotic of the smallest eigenvalues of a measure $\mu$ with support on the closed unit disk $\overline{\mathbb{D}}$ has a harmonic behaviour in the sense that $\lim_{n\to \infty} \lambda_n(\mu)=\lim_{n\to \infty} \lambda_{n}(\nu)$ being $\nu=\mu/\mathbb{T}$. In this direction,
   we prove that such asymptotic behaviour only depends on the absolutely continuous part of  $\nu$ with respect to the Lebesgue measure ${\bf m}$. We apply this result to the problem of inversion of a Toeplitz  moment matrix $\mathbf{T}(\nu)$. Motivated by the fact that $\mathbf{A}(\nu)=\mathbf{A}(\nu_a)$ in Example $3$ we state the problem of the equality of such matrices in the general case. In this direction we obtain that whenever $\lim_{n\to \infty} \lambda_n(\mu)>0$ then the bounded operators defined by both matrices have
the same norm. Moreover,
assuming that $\lim_{n\to \infty} b_{n-k,n}(\nu)= \lim_{n\to \infty} b_{n-k,n}(\nu_a)$ for every $k \geq 0$ we are be able to prove that
$\mathbf{A}(\nu)=\mathbf{A}(\nu_a)$
 in the general case.

 First, we introduce some notation. For $\mathbf{A}=(a_{i,j})_{i,j=0}^{n}$ being a finite matrix we identify the linear operator on $\CC^{n+1}$ induced by $\mathbf{A}$ with its matrix with respect to the standard basis of $\CC^{n+1}$.
Nevertheless in the infinite case we distinguish infinite matrices and operators using different notation: an infinite matrix $\mathbf{A}=(a_{i,j})_{i,j=0}^{\infty}$ defines a linear operator from the sequence space $\ell_2$ to $\ell_2$ if for every $x=(x_i)_{i=0}^{\infty}\in \ell_2$ the formal matrix product $\mathbf{A}x^{t}=(\sum_{j=1}^{\infty} a_{i,j}x_i )_{i=0}^{\infty}$ is defined and
belongs to $\ell_2$. If this operator is bounded it will be denoted by $\mathcal{A}$. Note that there are infinite matrices not defining bounded operators, consider for example the diagonal matrix $\mathbf{A}=(i\delta_{i,j})_{i,j=0}^{\infty}$. In \cite{Crone} a criterion has been proved to characterize when an infinite matrix defines a bounded operator from
$\ell_2$ to $\ell_2$. On the other hand, if $\mathcal{A}:\mathcal{H} \to \mathcal{H} $ is a bounded operator on a Hilbert space
$\mathcal{H}$ the representation of $\mathcal{A}$ using
an orthonormal basis $\mathfrak{B}= \{v_{i}\}_{i=0}^{\infty}$ in $\mathcal{H}
$ is by constructing the infinite matrix with entries
$a_{i,j}=\langle \mathcal{A}v_j,v_i \rangle$, i.e., $\mathbf{A}=(a_{i,j})_{i,j=0}^{\infty}$. On the Hilbert space $\ell_2$ we always use the
standard basis $\{e_n\}_{n=0}^{\infty}$.

\section{Infinite Transition matrices. Inversion of certain Hermitian moment
matrices. }

 Let
$\mathbf{M}=(c_{i,j})_{i,j=0}^{\infty} $ be an HPD matrix, in the
space $\mathbb{P}[z]$ two algebraical basis can be considered: the
basis which consists of the monomials $\mathfrak{B}
=\{z^{n}\}_{n=0}^{\infty}$ and the one that consists of the orthonormal polynomials
$\mathfrak{B}'=\{P_n(z)\}_{n=0}^{\infty}$ associated with the matrix $\mathbf{M}$ . We consider the coordinates of each element of the basis $\mathfrak{B}'$ with respect to $\mathfrak{B}$,
thus for every $n\in \NN_{0}$, $
P_n(z) = \sum_{k=0}^{n}
b_{k,n}z^k,
$ and let define the infinite upper triangular matrix $\mathbf{B}=(b_{k,n})_{k,n=0}^{\infty}$ with $b_{k,n}=0$
if $k>n$.

 Let  $n\in \NN_0$ be fixed, as usual $\mathbb{P}_n[z]$ denotes the space of polynomials of degree less or equal than $n$. The finite dimensional matrix  $\mathbf{B}_n$ is the transition matrix from the basis $\mathfrak{B}_n'=\{P_0(z), P_1(z),
\dots,P_n(z)\}$ in $\mathbb{P}_n[z]$ to
the basis $\mathfrak{B}_n=\{1, z,
\dots,z^n\}$. Since $\mathbf{M}_n,\mathbf{I}_n$ are both matricial representations of the same inner product with respect to $\mathfrak{B}_n,\mathfrak{B}_n'$ respectively then
$$
\mathbf{B}_n^{t}\mathbf{M}_n \overline{\mathbf{B}}_n=\mathbf{I}_n,
$$
 and consequently $\mathbf{M}_{n}^{-1} =\overline{\mathbf{B}_n}\mathbf{B}_n^{t}$ as it is proved in
\cite{Berg-Szwarc}, \cite{EGT} using kernel functions.

 As in the finite dimensional case the matrix $\mathbf{B}$ can be considered as the transition matrix from the basis $\mathfrak{B}'$ to $\mathfrak{B}$ in the following sense, if $p(z)=\sum_{k=0}^{n} v_kz^k= \sum_{k=0}^{n} w_kP_k(x)$ and
$(v_0,\dots, v_n,0,0,\dots),(w_0,\dots, w_n,0,0,\dots) \in c_{00}$ and $n\in \NN$:
$$
\left(
                    \begin{array}{c}
                      v_0 \\
                      \vdots \\
                     v_n \\
                      0  \\
                      \vdots \\
                    \end{array}
                  \right)
=
\mathbf{B}\left(
                    \begin{array}{c}
                      w_0 \\
                      \vdots \\
                      w_n \\
                      0  \\
                      \vdots \\
                    \end{array}
                  \right).
$$
 From now on we call $\mathbf{B}$
the {\it  transition matrix }  associated with the HPD matrix $\mathbf{M}$. In the case of
$\mathbf{M}$ being a moment matrix $\mathbf{M}(\mu)$ let denote $\mathbf{B}=\mathbf{B}(\mu)$.
\begin{remark} Note that if
the Cholesky decomposition of the Hermitian matrix $\mathbf{M}$ is given by $\mathbf{M}=\mathbf{L}\mathbf{L}^{*}$ then $\mathbf{B}^{t}=\mathbf{L}^{-1}$.
\end{remark}
\begin{definition} Let $\mathbf{M}$ be an HPD matrix and let $\mathbf{B}$ be the
transition matrix associated with $\mathbf{M}$. Let $\mathbf{A}=(a_{i,j})_{i,j=0}^{\infty}
$ be the  matrix
$\mathbf{A} = \overline{\mathbf{B}}\mathbf{B}^{t}$ whenever such formal matrix product is well defined, i.e. if for all $i,j \in \NN_0
$ there exists
$
a_{i,j}=\sum_{k=max\{i,j\}}^{\infty} \overline{b_{i,k}}b_{j,k}.
$
 In the case of $\mathbf{M}=\mathbf{M}(\mu)$ being a moment matrix associated with a measure $\mu$ we denote by
$\mathbf{A}=\mathbf{A}(\mu)$.
\end{definition}
\begin{remark} Note that the existence of the matrix product $\mathbf{A} = \overline{\mathbf{B}}\mathbf{B}^{t}$ is equivalent to the
fact that all the rows of the matrix $\mathbf{B}$ belong to $\ell_2$. 
\end{remark}

 The following result is essentially contained in \cite{Berg-Szwarc} in the case of Hankel matrices. The same result is true when Hermitian matrices are considered, not necessarily moment matrices. We include it
 for the sake of completeness :

\begin{lem} \label{lem1}
Let $\mathbf{M}$ be an infinite HPD matrix and let $\mathbf{B}$ be the  transition matrix associated with $\mathbf{\mathbf{M}}$. Then, the following are equivalent:

\begin{enumerate}
\item $\lim_{n\to \infty} \lambda_n = \lambda >0$.

\item The matrices $\mathbf{B}$ and $\mathbf{B}^{*}$ define bounded operators $\mathcal{B}$ and $\mathcal{B}^{*}$ on $\ell_2$ verifying
that $\Vert \mathcal{B} \Vert = \Vert \mathcal{B}^{*} \Vert =\lambda^{-1/2}$.
\end{enumerate}
\end{lem}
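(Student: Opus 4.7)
The plan is to reduce everything to the finite-dimensional case via the identity $\mathbf{M}_n^{-1}=\overline{\mathbf{B}_n}\mathbf{B}_n^{t}$ recorded just above the lemma. Since $\mathbf{M}_n$ is Hermitian positive definite, the spectral theorem gives $\|\mathbf{M}_n^{-1}\|=\lambda_n^{-1}$, where $\|\cdot\|$ is the operator norm on $\CC^{n+1}$. Writing $(\overline{\mathbf{B}_n})^{*}=\mathbf{B}_n^{t}$, the identity becomes $\mathbf{M}_n^{-1}=\overline{\mathbf{B}_n}\,(\overline{\mathbf{B}_n})^{*}$, so $\|\mathbf{M}_n^{-1}\|=\|\overline{\mathbf{B}_n}\|^{2}=\|\mathbf{B}_n\|^{2}$; the last equality uses that entrywise complex conjugation is an antilinear isometry on $\CC^{n+1}$, hence preserves the operator norm. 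Consequently $\|\mathbf{B}_n\|=\lambda_n^{-1/2}$ for every $n$, and the non-increasing nature of $\{\lambda_n\}$ forces $\|\mathbf{B}_n\|$ to be non-decreasing with $\sup_n\|\mathbf{B}_n\|=\lim_n\lambda_n^{-1/2}$.

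Next I exploit the upper-triangular structure of $\mathbf{B}$. For $x\in c_{00}$ supported in the first $n+1$ coordinates, $\mathbf{B}x$ is itself supported in the first $n+1$ coordinates and coincides with $\mathbf{B}_n x$, so $\|\mathbf{B}x\|_{\ell_2}=\|\mathbf{B}_n x\|\le\|\mathbf{B}_n\|\|x\|$. If $\lim_n\lambda_n=\lambda>0$, then $\sup_n\|\mathbf{B}_n\|=\lambda^{-1/2}<\infty$, and the density of $c_{00}$ in $\ell_2$ gives a unique bounded extension $\mathcal{B}$ with $\|\mathcal{B}\|\le\lambda^{-1/2}$. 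Conversely, if $\mathbf{B}$ defines a bounded operator $\mathcal{B}$, then applying $\mathcal{B}$ to vectors of $\CC^{n+1}\hookrightarrow\ell_2$ shows $\|\mathbf{B}_n\|\le\|\mathcal{B}\|$ for every $n$, so $\lambda_n\ge\|\mathcal{B}\|^{-2}>0$. Choosing in the first direction optimal unit vectors $x\in\CC^{n+1}$ realizing $\|\mathbf{B}_n\|$ shows that in fact $\|\mathcal{B}\|=\sup_n\|\mathbf{B}_n\|=\lambda^{-1/2}$.

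Finally I deal with $\mathbf{B}^{*}$. Once $\mathcal{B}$ is bounded on $\ell_2$, its Hilbert-space adjoint $\mathcal{B}^{*}$ is automatically bounded with $\|\mathcal{B}^{*}\|=\|\mathcal{B}\|=\lambda^{-1/2}$, and its matrix in the standard basis is exactly $\mathbf{B}^{*}$, because $\langle\mathcal{B}^{*}e_j,e_i\rangle=\overline{\langle\mathcal{B}e_i,e_j\rangle}=\overline{b_{i,j}}$; in particular, the formal matrix product $\mathbf{B}^{*}x$ converges in $\ell_2$ to $\mathcal{B}^{*}x$ for every $x\in\ell_2$. The converse (if $\mathbf{B}^{*}$ defines a bounded operator then so does $\mathbf{B}$) is symmetric. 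This closes the loop of implications.

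The only slightly delicate step is the identification $\|\mathcal{B}\|=\sup_n\|\mathbf{B}_n\|$, for which the upper-triangularity of $\mathbf{B}$ (and the resulting compatibility of the truncations with the action on $c_{00}$) is essential; the rest is a routine consequence of the finite-dimensional identity $\mathbf{M}_n^{-1}=\overline{\mathbf{B}_n}\mathbf{B}_n^{t}$ combined with $\|\mathbf{M}_n^{-1}\|=\lambda_n^{-1}$.
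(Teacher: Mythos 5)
Your proof is correct, and its skeleton matches the paper's: both arguments hinge on the finite-dimensional identity $\Vert \mathbf{B}_n\Vert^2=\lambda_n^{-1}$ and then pass to the infinite matrix by controlling $\sup_n\Vert\mathbf{B}_n\Vert$. The two steps are executed differently, though. For the norm identity the paper computes $\lambda_n^{-1}$ as a maximum of the Rayleigh quotient $vv^{*}/v\mathbf{M}_nv^{*}$ and performs the change of variables $v=w\mathbf{B}_n^{t}$, whereas you read it off from $\Vert\mathbf{M}_n^{-1}\Vert=\lambda_n^{-1}$ together with $\mathbf{M}_n^{-1}=\overline{\mathbf{B}_n}\,(\overline{\mathbf{B}_n})^{*}$ and $\Vert AA^{*}\Vert=\Vert A\Vert^{2}$; this is shorter and equivalent. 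For the passage to $\ell_2$ the paper invokes Crone's criterion ($\mathbf{B}$ bounded iff $\sup_n\Vert\Pi_n\mathbf{B}^{*}\mathbf{B}\Pi_n\Vert<\infty$), while you argue directly: upper-triangularity makes the action of $\mathbf{B}$ on vectors supported in the first $n+1$ coordinates coincide with that of $\mathbf{B}_n$, and density of $c_{00}$ does the rest. Your route is more self-contained (no external citation) and, as a bonus, sidesteps the paper's small slip $\Vert\Pi_n\mathbf{B}^{*}\mathbf{B}\Pi_n\Vert=\Vert\mathbf{B}_n\Vert$, which should read $\Vert\mathbf{B}_n\Vert^{2}$. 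Two cosmetic remarks: with the paper's convention $b_{i,j}=\langle\mathcal{B}e_j,e_i\rangle$ the entry of $\mathbf{B}^{*}$ you compute is $\overline{b_{j,i}}$ rather than $\overline{b_{i,j}}$; and, strictly speaking, one should note that the bounded extension of the restriction of $\mathbf{B}$ to $c_{00}$ agrees with the formal matrix product on all of $\ell_2$ (immediate, since $\langle\mathcal{B}x,e_k\rangle=\lim_n\langle\mathcal{B}\Pi_nx,e_k\rangle$ recovers the row sums), a point the paper glosses over as well.
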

\begin{proof}  First of all we show that $
\Vert \mathbf{B}_n \Vert^2=\lambda_n^{-1}$
for every $n\in \NN_0$. Indeed,
$$
\lambda_n =
\min \{\dfrac{v\mathbf{M}_nv^{*}}{vv^{*}}: 0 \neq v\in \CC^{n+1}\} =
\frac{1}{\max  \{\dfrac{vv^{
*}}{v\mathbf{M}_nv^{*}} :
  0 \neq v\in
\CC^{n+1}\}}
$$
 and therefore:
\begin{eqnarray*}
\lambda_n^{-1}
&=& \max  \{\dfrac{vv^{*}}{v\mathbf{M}_nv^{*}} :  v\in
\CC^{n+1}\}= \max \{ vv^{*}:  v\in \CC^{n+1}, v\mathbf{M}_nv^{*}=1\}\\
\mbox{} & = &
\max  \{ w\mathbf{B}_n^{t}\overline{\mathbf{B}}_nw^{*} : w\in \CC^{n+1},
w\mathbf{B}_{n}^{t}\mathbf{M}_n\overline{\mathbf{B}}_nw^{*}=1\} \\
\mbox{} & = &
 \max  \{ \Vert \overline{\mathbf{B}}_nw^{*} \Vert^2 : w\in
\CC^{n+1}, ww^{*}=1\} \\
\mbox{} & = & \Vert \overline{\mathbf{B}}_n \Vert^2= \Vert \mathbf{B}_n \Vert^2.
\end{eqnarray*}

 Then, for every $n\in \NN_0$ we have $\Vert \mathbf{B}_n  \Vert = \Vert \mathbf{B}_n^{*} \Vert=\lambda_n^{-1/2}$. Consequently,
the sequence $\{\Vert \mathbf{B}_n \Vert\}_{n=0}^{\infty}$ is
bounded if and only if $\lim_{n\to \infty} \lambda_n >0$. Now, it is well known (see e.g. \cite{Crone}) that
$\mathbf{B}$ defines a bounded operator on $\ell_2$ if and only if $\sup_{n} \Vert \Pi_n \mathbf{B}^{*}\mathbf{B} \Pi_n \Vert <\infty$ where
$\Pi_n(x)$ is the $n-th$ section of the vector $x\in \ell_2$, i.e., $(\Pi_n(x))_i=x_i$ if $i\leq n$ and $(\Pi_n(x))_i=0$ if $i >n$.
Since $\Vert \Pi_n \mathbf{B}^{*}\mathbf{B} \Pi_n \Vert= \Vert \mathbf{B}_n \Vert$ it follows that  $\mathbf{B}$ defines a bounded operator $\mathcal{B}$
on $\ell_2$ if and only if
$\lim_{n \to \infty} \lambda_n =\lambda >0$ ; moreover, $\Vert \mathcal{B} \Vert =\lambda^{-1/2}$.

 Finally, if $\mathcal{B}$ is a bounded operator on $\ell_2$ then the adjoint $\mathcal{B}^{*}$ is
bounded and verifies $\Vert \mathcal{B}^{*} \Vert = \Vert\mathcal{B} \Vert =\lambda^{-1/2}$.

\end{proof}

 As a consequence of Lemma \ref{lem1} we give a sufficient condition in terms of the asymptotic  behaviour of $\lambda_n$
to assure the existence of the matrix $\mathbf{A}$ defining a bounded operator $\mathcal{A}$ on $\ell_2$.

\begin{lem} \label{lem2}
Let $\mathbf{M}$ be an HPD matrix such that $\lim_{n\to \infty} \lambda_n=\lambda >0$, then the
matrix $\mathbf{A}= \overline{\mathbf{B}}\mathbf{B}^t=(a_{i,j})_{i,j=0}^{\infty}$ exists
and defines the bounded operator $\mathcal{A}=\overline{\mathcal{B}}\mathcal{B}^{*}$ on $\ell_2$. Moreover,
\begin{enumerate}
\item $
a_{i,j}= \lim_{n\to \infty} \mathbf{M}_n^{-1}[i,j]$ for every $i,j \in \NN_0$.
\item $\Vert \mathcal{A} \Vert = \Vert \mathcal{B} \Vert^2=\lambda^{-1}$.
\end{enumerate}
\end{lem}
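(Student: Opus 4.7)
The strategy is to leverage Lemma~\ref{lem1}: under the hypothesis $\lim_n \lambda_n = \lambda > 0$, the matrices $\mathbf{B}$ and $\mathbf{B}^{*}$ define bounded operators $\mathcal{B}, \mathcal{B}^{*}$ on $\ell_2$, each of norm $\lambda^{-1/2}$. Boundedness of $\mathcal{B}^{*}$ forces every column of $\mathbf{B}^{*}$, i.e.\ every row of $\mathbf{B}$, to lie in $\ell_2$. By Remark~2 this is precisely the condition for the formal product $\mathbf{A} = \overline{\mathbf{B}}\mathbf{B}^{t}$ to be well defined, with
\begin{equation*}
a_{i,j} = \sum_{k=\max(i,j)}^{\infty} \overline{b_{i,k}} b_{j,k},
\end{equation*}
the truncation coming from upper triangularity of $\mathbf{B}$. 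Since $\mathbf{B}^{t} = \overline{\mathbf{B}^{*}}$ entry-wise, $\mathbf{A}$ coincides with the entry-wise conjugate of the matrix of the bounded operator $\mathcal{B}\mathcal{B}^{*}$, and entry-wise conjugation preserves operator norm on $\ell_2$ (it is conjugation by the antiunitary $x\mapsto \overline{x}$). Hence $\mathbf{A}$ represents a bounded operator $\mathcal{A}$ with $\|\mathcal{A}\| = \|\mathcal{B}\mathcal{B}^{*}\|$.

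For item (1) I would start from the identity $\mathbf{M}_n^{-1} = \overline{\mathbf{B}_n}\mathbf{B}_n^{t}$ recalled immediately before the lemma. For fixed $i,j$ and any $n \geq \max(i,j)$, upper triangularity of $\mathbf{B}$ gives
\begin{equation*}
\mathbf{M}_n^{-1}[i,j] = \sum_{k=\max(i,j)}^{n} \overline{b_{i,k}} b_{j,k}.
\end{equation*}
By Cauchy--Schwarz the series defining $a_{i,j}$ converges absolutely, since both $(b_{i,k})_{k}$ and $(b_{j,k})_{k}$ lie in $\ell_2$; consequently the truncated sums tend to $a_{i,j}$ as $n \to \infty$, proving (1).

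Item (2) then follows from the preceding identification together with the $C^{*}$-identity applied to $\mathcal{B}$:
\begin{equation*}
\|\mathcal{A}\| = \|\mathcal{B}\mathcal{B}^{*}\| = \|\mathcal{B}\|^{2} = \lambda^{-1},
\end{equation*}
the last equality being Lemma~\ref{lem1}. I expect the only real friction to be notational: one must carefully distinguish the formal matrix product $\overline{\mathbf{B}}\mathbf{B}^{t}$ from the operator composition $\overline{\mathcal{B}}\mathcal{B}^{*}$ and remember that the ``bar'' on an operator corresponds to entry-wise conjugation of its matrix in the standard basis. Once this dictionary is in place, the proof reduces to a transparent truncation argument plus standard facts about bounded operators on $\ell_2$; there is no serious analytic obstacle.
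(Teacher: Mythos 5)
Your proof is correct and follows essentially the same route as the paper's: invoke Lemma~\ref{lem1} for boundedness of $\mathcal{B}$ and $\mathcal{B}^{*}$, deduce that the rows of $\mathbf{B}$ lie in $\ell_2$ so the series for $a_{i,j}$ converges absolutely, identify the partial sums with $\mathbf{M}_n^{-1}[i,j]$ via $\mathbf{M}_n^{-1}=\overline{\mathbf{B}_n}\mathbf{B}_n^{t}$, and finish with the $C^{*}$-identity for the norm. Your explicit treatment of the entry-wise conjugation as conjugation by the antiunitary $x\mapsto\overline{x}$ is a slightly more careful rendering of what the paper states without comment, but it is not a different argument.
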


\begin{proof}
By Lemma \ref{lem1}
 the matrices $\overline{\mathbf{B}},\overline{\mathbf{B}}^{*}$ define bounded operators
on $\ell_2$ and consequently the rows and columns of such matrices belong to $\ell_2$. Then for every
$i,j\in \NN_0$ the series $\sum_{k=max\{i,j\}}^{\infty} \overline{b_{i,k}}b_{j,k}
$
is absolutely convergent and
$$
a_{i,j}=\sum_{k=max\{i,j\}}^{\infty}\overline{b_{i,k}}b_{j,k} = \lim_{n\to \infty} \sum_{k=max\{i,j\}}^n \overline{b_{i,k}}b_{j,k} = \lim_{n\to \infty}
\mathbf{M}_n^{-1}[i,j].
$$
 Therefore the matrix $\mathbf{A}$ exists and is the representation with respect to the standard basis of $\ell_2$
of the operator $\overline{\mathcal{B}}\mathcal{B}^{*}$. Moreover,$\Vert \mathcal{A} \Vert = \Vert \overline{\mathcal{B}}\mathcal{B}^* \Vert^2=\Vert \mathcal{B} \Vert^2=\lambda^{-1}$.
\end{proof}

 In general, when $\lim_{n \to \infty} \lambda_n(\mu)=0$ for a certain measure $\mu$ one can not even assure the existence of the
infinite matrix $\mathbf{A}(\mu)$ as the following example shows:

\begin{ejem} There exists an HPD moment matrix $\mathbf{M}(\mu)$ associated with a measure $\mu$ on $\CC$ such that $\lim_{n\to \infty} \lambda_n(\mu)=0$
and there does not exist the matrix $\mathbf{A}(\mu)$. Indeed, consider the Lebesgue measure $\mu$ in the circle
with center $(1,0)$ and radio $1$, and the associated moment matrix
$$
\mathbf{M}(\mu)= \left(%
\begin{array}{ccccc}
  1 & 1 & 1 & 1 & \hdots \\
  1 & 2 & 3 & 4 & \hdots \\
  1 & 3 & 6 & 10 & \hdots  \\
  1 & 4 & 10 & 20 & \hdots  \\
  \vdots & \vdots & \vdots & \vdots & \ddots \\
\end{array}%
\right).
$$
 The sequence of orthonormal polynomials associated with $\mu$  is $P_n(z)=(z-1)^n$ for all $n\geq 0$ and since $\sum_{k=0}^{\infty} \vert P_k(0) \vert^2 = \infty$ then the matrix
$\mathbf{A}(\mu)$ does not exist. Note that the transition matrix is $\mathbf{B}(\mu)=(b_{k,n})_{k,n=0}^{\infty}$ with
$b_{k,n}=(-1)^{n-k}\binom{n}{k}$ if $k \leq n$ and $b_{k,n}=0$ if $k>n$ which obviously does not define a bounded operator on $\ell_2$.
\end{ejem}

\begin{ejem} There exist  HPD moment matrices $\mathbf{M}(\mu)$ associated with
certain measures $\mu$ on $\CC$ such that $\lim_{n\to \infty} \lambda_n(\mu)=0$, the matrix $\mathbf{A}(\mu)$ exists and is a classical inverse of $\mathbf{M}(\mu)$. The easiest example is a diagonal matrix: indeed, let $\mu$ be the Lebesgue measure (uniform measure)
on the unit disk $\overline{\mathbb{D}}$; it is well known that $\mathbf{M}(\mu)= \left(c_{i,i}\delta_{i,j}\right)_{i,j=0}^{\infty}$ verifying
$c_{i,i}=\frac{\pi}{i+1}$. Then, $\mathbf{A}(\mu)$ is the diagonal matrix with entries $a_{i,i}=\frac{i+1}{\pi}$ verifying
$\mathbf{A}(\mu)\mathbf{M}(\mu)= \mathbf{M}(\mu)\mathbf{A}(\mu)=\mathbf{I}
$.

 We now provide another example: let $0<a<1$ and $\mathbf{M}=\left(a^{\max\{i,j\}}\right)_{i,j=0}^{\infty}$

$$\mathbf{M} =  \left (
\begin{array}{ccccc}
1 & a & a^2 & a^3 & \ldots \\
 a & a & a^2 & a^3 & \ldots \\
 a^2 & a^2 & a^2 & a^3 & \ldots \\
  a^3 & a^3 & a^3 & a^3
   & \ldots \\
  \vdots & \vdots & \vdots & \vdots & \ddots
\end{array}
\right).
$$
 It can be easily checked that $\mathbf{M}$ is positive definite since $\vert M_n \vert=a^{\frac{n(n+1)}{2}}(1-a)^n>0$ for
each $n\in \NN_0$. Consider the diagonal matrix $\mathbf{D}
=\left( a^{i/2} \delta_{i,j} \right)_{i,j=0}^{\infty}$ and the Toeplitz matrix
$\mathbf{T}=\left( a^{\frac{\vert i-j \vert}{2}} \right)_{i,j=0}^{\infty}$ then it holds that
$$
\mathbf{M}=\mathbf{D}^{t}\mathbf{T}\mathbf{D}.
$$
 Taking in account this equality it is obvious that $\mathbf{T}$ is an HPD Toeplitz matrix and consequently $\mathbf{T}=\mathbf{T}(\nu)$ for
a certain measure $\nu$ with support on $\mathbb{T}$. Using \cite{EST} and
\cite{Jorgensen} it follows
that  $\mathbf{M}$ is the moment matrix
of the image measure $\mu=\nu \circ f^{-1}$ under the transformation $f(z)=\sqrt{a}z$. In this particular case it can easily be obtained the orthonormal polynomials by the equations (see e.g. \cite{Szego}):
$$
P_n(z)=\dfrac{1}{\sqrt{\vert M_{n-1} M_{n}\vert}}
 \left|\begin{array}{cccc}
         1 & a & \hdots  & a^n \\
         a & a & \hdots & a^n \\
         \vdots & \vdots & \mbox{}   & \vdots \\
         1 & z &  \hdots& z^n \\
       \end{array}
   \right| = \frac{z^{n-1}}{\sqrt{a^n(1-a)}}(z-a) \,
     , n=
     1,2, \dots $$
 Consequently,
$$
\mathbf{B}(\mu)=\left(
             \begin{array}{ccccc}
               1 & \frac{-a}{\sqrt{a(1-a)}} & 0 & 0 & \hdots \\
               0 & \frac{1}{\sqrt{a(1-a)}} & \frac{-a}{\sqrt{a^2(1-a)}} & 0 & \hdots \\
               0 & 0 & \frac{1}{\sqrt{a^2(1-a)}} & \frac{-a}{\sqrt{a^3(1-a)}} & \hdots \\
               0 & 0 & 0 & \frac{1}{\sqrt{a^3(1-a)}} & \hdots \\
               \vdots & \vdots  & \vdots & \vdots & \ddots \\
             \end{array}
           \right).
$$
 In particular, $\mathbf{B}(\mu)
$ does not define a bounded operator on $\ell_2$ and by Lemma $1$ we
have that $\lim_{n\to \infty} \lambda_n(\mu)=0$. It can be checked $\mathbf{A}(\mu)\mathbf{M}(\mu)= \mathbf{M}(\mu)\mathbf{A}(\mu)=I$ where
$$
\mathbf{A}(\mu)
=\frac{1}{1-a}
\left(
             \begin{array}{ccccc}
               1 & -
               1 & 0 & 0 & \hdots \\
               -1 & \frac{a+1}{a} & \frac{-1}{a} & 0 & \hdots \\
               0 & \frac{-1}{a} & \frac{a+1}{a^2} & \frac{-1}{a^2} & \hdots \\
               0 & 0 & \frac{-1}{a^2} & \frac{a+1}{a^3} & \hdots
                \\
               \vdots & \vdots  & \vdots & \vdots & \ddots \\
             \end{array}
           \right).
$$
\end{ejem}

 The following example shows that the answer to Problem 1 is negative in the Hermitian case, even for Toeplitz  moment matrices:

\begin{ejem} There exists a moment Toeplitz matrix $\mathbf{T}(\nu)$ associated with a measure $\nu$ with support on $\mathbb{T}$ verifying $\lim_{n\to \infty} \lambda_n(\nu)>0$ and nevertheless
$$\mathbf{T}(\nu)\mathbf{A}(\nu) \neq \mathbf{I}, \qquad \mathbf{A}(\nu)\mathbf{T}(\nu) \neq \mathbf{I}.
$$
  Indeed, consider $\nu= \frac{1}{2}{\bf m} + \mu_{0}$ where
$\mu_{0}$ is the measure with support $\{0\}$ being a point mass with $\mu_{0}(\{0\})=
\frac{1}{2}$. Consider
$$
\mathbf{T}(\nu)= \begin{pmatrix}
  1  & \frac{1}{2}& \frac{1}{2} & \frac{1}{2} & \hdots  \\
  \frac{1}{2} & 1 & \frac{1}{2}  & \frac{1}{2} & \hdots  \\
 \frac{1}{2} & \frac{1}{2} & 1  & \frac{1}{2} & \hdots  \\
 \frac{1}{2} & \frac{1}{2} &  \frac{1}{2}   & 1 & \hdots   \\
  \vdots  & \vdots &
  \vdots  & \vdots  & \ddots
\end{pmatrix}.
$$
Since $\mathbf{T}(\nu) \geq \frac{1}{2}\mathbf{I}$ in the sense that for every $v\in c_{00}$ we have
$v\mathbf{T}(\nu)v^{*} \geq \frac{1}{2}vv^{*}$, it follows that $\lambda_n(\nu) \geq \frac{1}{2}$ for all $n\in \NN_0$ and $\lim_{n\to \infty} \lambda_n(\nu) \geq \frac{1}{2}$. It can be checked that
$\mathbf{M}_n^{-1}[i,j]= -\frac{2}{n+1}$ if $i\neq j$ and $\mathbf{M}_n^{-1}[i,i]= \frac{2n}{n+1}$ for each $n\in \NN_0$ and $0\leq i,j \leq n$. By using Lemma $2$
it follows that $a_{i,j}=\lim_{n\to \infty} \mathbf{M}_n^{-1}[i,j]= 2\delta_{i,j}$ and $\mathbf{A}(\nu)=2\mathbf{I}$. Therefore
$$\mathbf{A}(\mu)\mathbf{T}(\nu) \neq \mathbf{I} \qquad \qquad  \, \text{and } \qquad \qquad \mathbf{T}(\nu)\mathbf{A}(\nu)
 \neq \mathbf{I}.$$
Using the efficient numerical algorithms of Cholesky decomposition and inversion of lower triangular matrices we can determinate
the explicit representation of the
transition matrix $\mathbf{B}(\nu)=(b_{k,n})_{k,n=0}^{\infty}$ with $b_{n,n}=\frac{\sqrt{2n}}{\sqrt{n+1}}$ and $b_{k,n}= -\frac{\sqrt{2}}{\sqrt{n(n+1)}}$ if $k<n$.
\end{ejem}

\begin{remark} We want to point out that in Example $3$ the absolutely part of the measure $\nu$ is $\nu_a=\frac{1}{2}{\bf m}$ and the limits
$
\lim_{n\to \infty} b_{n-k,n}(\nu), \lim_{n\to \infty} b_{n-k,n}(\nu_a)
$ exist for every $k\geq 0$. This fact is true for general Toeplitz moment matrices as we will show in next section. Moreover, in this case
$
\lim_{n\to \infty} b_{n-k,n}(\nu)= \lim_{n\to \infty} b_{n-k,n}(\nu_a);
$
 we do not know if this is true in general.
\end{remark}

 Next we give a sufficient condition to assure that $\mathbf{A}$ is a classical inverse matrix of an HPD matrix
$\mathbf{M}$, not necessarily a moment matrix, verifying
$\lim_{n\to \infty} \lambda_n >0$:

\begin{prop} \label{prop1}
Let $\mathbf{M}=(c_{i,j})_{i,j=0}^{\infty}$ be an HPD matrix with $\lim_{n \to \infty} \lambda_n >0$. If $
\sum_{k=0}^{\infty} \vert c_{i,k} \vert^2 < \infty$ for all $i\in \NN_0$ then
$
\mathbf{A}\mathbf{M}= \mathbf{M}\mathbf{A} =\mathbf{I}.
$
\end{prop}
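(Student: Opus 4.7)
The plan is to pass the finite identity $\mathbf{M}_n\mathbf{M}_n^{-1}=\mathbf{I}_n$ to the infinite limit entry by entry, using Lemma \ref{lem2} together with a weak $\ell_2$-convergence argument. By Lemma \ref{lem2} the matrix $\mathbf{A}$ exists, defines a bounded operator $\mathcal{A}$ on $\ell_2$, and its entries satisfy $a_{k,j}=\lim_{n\to\infty}\mathbf{M}_n^{-1}[k,j]$; in particular each column $(a_{k,j})_{k=0}^\infty$ lies in $\ell_2$. The hypothesis $\sum_k|c_{i,k}|^2<\infty$ says that each row of $\mathbf{M}$ lies in $\ell_2$, so the conjugate sequence $u_i:=(\overline{c_{i,k}})_{k=0}^\infty$ also belongs to $\ell_2$ for every $i$.

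First I would establish weak $\ell_2$-convergence of the truncated inverse columns. Fix $j$, and for each $n$ let $v^{(n)}\in\ell_2$ be the vector whose first $n+1$ coordinates coincide with $\mathbf{M}_n^{-1}e_j$ and whose remaining coordinates vanish. Since $\mathbf{M}_n$ is HPD with smallest eigenvalue $\lambda_n$, its inverse satisfies $\|\mathbf{M}_n^{-1}\|_{\mathrm{op}}=\lambda_n^{-1}$, and the monotonicity of $\{\lambda_n\}$ yields $\|v^{(n)}\|_{\ell_2}\leq\lambda_n^{-1}\leq\lambda^{-1}$. Combined with the entrywise convergence $v^{(n)}_k\to a_{k,j}$ provided by Lemma \ref{lem2} and the density of $\mathrm{span}\{e_k\}$ in $\ell_2$, this gives $v^{(n)}\rightharpoonup\mathcal{A}(e_j)$ weakly in $\ell_2$.

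Next, for every $n\geq\max(i,j)$, reading the $(i,j)$ entry of $\mathbf{M}_n\mathbf{M}_n^{-1}=\mathbf{I}_n$ yields
$$\langle v^{(n)},u_i\rangle_{\ell_2}=\sum_{k=0}^n c_{i,k}\,\mathbf{M}_n^{-1}[k,j]=\delta_{i,j}.$$
Letting $n\to\infty$ and invoking weak convergence together with $u_i\in\ell_2$ produces
$$\delta_{i,j}=\langle\mathcal{A}(e_j),u_i\rangle_{\ell_2}=\sum_{k=0}^\infty c_{i,k}\,a_{k,j}=(\mathbf{M}\mathbf{A})_{i,j},$$
with absolute convergence of the last series guaranteed by Cauchy--Schwarz in $\ell_2$. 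For the reverse product I would invoke Hermitian symmetry: since $\mathbf{M}$ is Hermitian and $\mathbf{A}=\overline{\mathbf{B}}\mathbf{B}^t$ is also Hermitian, and since the $\ell_2$-row hypothesis on $\mathbf{M}$ is symmetric in $i$ and $j$, entrywise conjugate transposition of the identity $\mathbf{M}\mathbf{A}=\mathbf{I}$ gives $\mathbf{A}\mathbf{M}=\mathbf{I}$.

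The main obstacle is precisely the passage from the finite identity to the infinite one, because pointwise convergence of the entries of $\mathbf{M}_n^{-1}$ alone is insufficient: $\mathbf{M}$ need not define a bounded operator on $\ell_2$, so standard operator-norm continuity is unavailable. The two ingredients that close the gap are the uniform bound $\|\mathbf{M}_n^{-1}\|_{\mathrm{op}}\leq\lambda^{-1}$ coming from the hypothesis $\lim\lambda_n>0$, which upgrades entrywise convergence of the columns to weak $\ell_2$-convergence, and the $\ell_2$-row hypothesis on $\mathbf{M}$, which supplies a legitimate test vector $u_i$ against which to pair those weakly convergent columns.
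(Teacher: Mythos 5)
Your proof is correct and rests on exactly the same ingredients as the paper's: the uniform bound $\Vert \mathbf{M}_n^{-1}\Vert = \lambda_n^{-1}\leq \lambda^{-1}$, the entrywise convergence $\mathbf{M}_n^{-1}[i,j]\to a_{i,j}$ from Lemma \ref{lem2}, the $\ell_2$ hypothesis on the rows (equivalently columns) of $\mathbf{M}$, and Hermitian symmetry to pass from one product to the other. The only difference is packaging --- you pair weakly convergent columns of $\mathbf{M}_n^{-1}$ against a fixed $\ell_2$ row of $\mathbf{M}$ to get $\mathbf{M}\mathbf{A}=\mathbf{I}$ first, while the paper applies the strongly convergent truncations $\Pi_n\mathcal{A}\Pi_n$ to a fixed $\ell_2$ column of $\mathbf{M}$ to get $\mathbf{A}\mathbf{M}=\mathbf{I}$ first --- which is the dual formulation of the same limit-interchange argument.
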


\begin{proof} First of all since $\mathbf{M}$ is Hermitian and $\sum_{k=0}^{\infty} \vert c_{i,k} \vert^2 < \infty$ for all $i\in \NN_0$ it follows  that all the rows and columns of $\mathbf{M}$ belong to $\ell_2$. On the other hand, since $\lim_{n\to \infty} \lambda_n > 0$ by Lemma $2$ the matrix
$\mathbf{A}$ is the  representation of a bounded operator on $\ell_2$ and, consequently, the rows and columns of
$\mathbf{A}$ belong to $\ell_2$. Then both matrices $\mathbf{A}\mathbf{M} $ and $\mathbf{M}\mathbf{A}
$ are well defined. We first show that for each $j,k \in \NN_0$
$$
\mathbf{A}\mathbf{M}[j,k]= \delta_{j,k}.
$$

We introduce the following notation: let $\mathcal{B}$
be a bounded operator on $\ell_2$ we denote by $\tilde{\mathcal{B}}_n=\Pi_n\mathcal{B} \Pi_n$,
where $\Pi_n$ is defined as in Lemma $2$, which matrix representation is given by
$\left( \begin{array}{cc}
               B_n & 0 \\
                           0 & 0 \\
                         \end{array}
               \right)$. It is well known (see e.g \cite{Halmos}) that $\{\tilde{\mathcal{{B}}}_n\}_{n=0}^{\infty}$ is strongly convergent to $\mathcal{B}$ on $\ell_2$. Since $\{\tilde{\mathcal{\overline{B}}}_n\}_{n=0}^{\infty}$ and
                                                $\{\tilde{\mathcal{\overline{B}}_n^{*}}\}_{n=0}^{\infty}$ are strongly convergent to $\mathcal{\overline{B}},\mathcal{\overline{B}}^{*}$ respectively then $\{\tilde{\mathcal{\overline{B}}}_n \tilde{\mathcal{\overline{B}}_n^{*}}\}_{n=0}^{\infty}$ is strongly convergent to $\mathcal{\overline{B}
                                                } \mathcal{\overline{B}}^{*}$. Fix
                                               $k\in \NN_0$, since  $(c_{i,k})_{i=0}^{\infty}\in \ell_2$ it follows that for every $n \geq k$ we have that
                                               $$\mathbf{A}\mathbf{M}e_k^{t}= \lim_{n\to \infty}
\left(
                         \begin{array}{cc}
                           \overline{\mathbf{B}_n\mathbf{B}_{n}^{*}} & 0 \\
                           0 & 0 \\
                         \end{array}
                                                \right)\left(\begin{array}{c}
                                c_{0,k} \\
                                c_{1,k} \\
                                \vdots  \\
                                c_{n,k} \\
                                c_{n+1,k} \\
                                \vdots
                              \end{array}\right)= \lim_{n\to \infty}  \left(
                         \begin{array}{cc}
                           \overline{\mathbf{B}_n\mathbf{B}_{n}^{*}} & 0 \\
                           0 & 0 \\
                         \end{array}
                                                \right)\left(\begin{array}{c}
                                c_{0,k} \\
                                c_{1,k} \\
                                \vdots  \\
                                c_{n,k} \\
                               0 \\
                                \vdots
                              \end{array}\right)=e_k.
$$
 Therefore
$$
e_j\mathbf{A}\mathbf{M}e_k^{t
}= \delta_{j,k}.
$$
 On the other hand, since $\mathbf{M}$ and $\mathbf{A}$ are Hermitian matrices $\mathbf{M}\mathbf{A}=\mathbf{I}$.
\end{proof}

\begin{remark} For an Hermitian matrix  $\mathbf{M}=(c_{i,j})_{i,j=0}^{\infty}$
 the condition $\sum_{k=0}^{\infty} \vert c_{i,k} \vert^2 < \infty$ for all $i\in \NN_0$ is equivalent to that
the mapping $\mathbf{M}:c_{00} \to \ell_2$ given by the formal matrix multiplication is well
defined.
\end{remark}

 Unfortunately Proposition $1$ does not provide any information for Hankel matrices. Indeed, in the indeterminate case (i.e. when
$\lim_{n\to \infty} \lambda_n >0$ by \cite{Berg-Szwarc}) the Hankel matrix $\mathbf{H}$ does not define a bounded operator on $\ell_2$;
moreover, $\mathbf{H}$ does not define even a linear mapping  from $c_{00}$ as we show in the following lemma:

\begin{lem} Let $\mathbf{H}=(s_{i+j})_{i,j=0}^{\infty}$ be a Hankel moment matrix such that $\lim_{n\to \infty} \lambda_n >0$. Then
$\sum_{n=0}^{\infty} s_n^2 =\infty$; in particular,  $\mathbf{H}$ does not define a linear
mapping from $c_{00}$.
\end{lem}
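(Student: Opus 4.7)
My plan is to exploit the Berg--Chen--Ismail characterization, recalled in the Introduction via \cite{Berg}: for a Hankel moment matrix $\mathbf{H}(\mu)$, the condition $\lim_{n\to\infty}\lambda_n>0$ is equivalent to the measure $\mu$ being indeterminate. I will then show that indeterminacy forces $s_{2n}\to\infty$, which immediately gives $\sum_{n=0}^{\infty} s_n^2=\infty$, and the ``in particular'' statement follows from Remark~4 applied to the first column of $\mathbf{H}$.

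The reduction proceeds in three elementary steps. First, any positive Borel measure on $\RR$ with compact support is determinate (by Stone--Weierstrass density of polynomials in $C(\supp\mu)$ combined with the Riesz representation theorem, or, alternatively, via Carleman's criterion applied to the trivial bound $s_{2n}\leq R^{2n}s_0$). Hence the indeterminate $\mu$ provided by the hypothesis must have unbounded support. Second, unbounded support means that for every $R>0$ the open set $\{|x|>R\}$ contains a point of $\supp\mu$, so it has strictly positive $\mu$-measure. Third, fixing any $R>1$ yields
\[
s_{2n}=\int x^{2n}\,d\mu(x)\;\geq\; \int_{|x|>R} x^{2n}\,d\mu(x)\;\geq\; R^{2n}\,\mu\bigl(\{|x|>R\}\bigr)\;\longrightarrow\;\infty,
\]
so in particular the subsequence $\{s_{2n}^{2}\}$ does not tend to zero, giving $\sum_{n=0}^{\infty} s_n^2\geq\sum_{n=0}^{\infty}s_{2n}^{2}=\infty$.

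For the last assertion, the zeroth column of $\mathbf{H}$ is $\mathbf{H}\,e_0^{t}=(s_n)_{n=0}^{\infty}$, whose $\ell_2$-norm is infinite by the above; by the criterion stated in the preceding remark, $\mathbf{H}$ cannot define a linear mapping $c_{00}\to\ell_2$ via the formal matrix product. The only nontrivial input is the equivalence between $\lim_{n\to\infty}\lambda_n>0$ and indeterminacy, which is the deep content of \cite{Berg}; once that is invoked, the argument is a routine estimate on moments of a measure with unbounded support, so I do not anticipate any real obstacle.
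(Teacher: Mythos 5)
Your argument is correct, but it takes a much heavier route than the paper does. The paper's proof is a two-line contrapositive requiring no measure theory at all: the diagonal entry $s_{2n}$ of $\mathbf{H}$ is the value of the quadratic form at a standard unit vector, so $s_{2n}=e_n\mathbf{H}_ne_n^{*}\geq \lambda_n\geq \lambda>0$ for all $n$; hence the terms $s_{2n}^2$ are bounded away from zero and the series diverges. By contrast, you invoke the full Berg--Chen--Ismail equivalence between $\lim_{n\to\infty}\lambda_n>0$ and indeterminacy, deduce that the representing measure has unbounded support (since compactly supported measures are determinate), and then estimate $s_{2n}\geq R^{2n}\mu(\{|x|>R\})\to\infty$. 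Each step is sound, and your route buys a strictly stronger conclusion ($s_{2n}\to\infty$ rather than merely $\liminf s_{2n}\geq\lambda$), which is consistent with the known fact that indeterminate measures have rapidly growing even moments. What it costs is generality and economy: the paper's argument uses only that $\mathbf{H}$ is an HPD Hankel matrix with $\lim_{n\to\infty}\lambda_n>0$ and rests on nothing deeper than the Rayleigh quotient, whereas yours genuinely needs $\mathbf{H}$ to be a moment matrix and leans on a deep external theorem to prove an elementary fact. Your handling of the ``in particular'' clause via the zeroth column and Remark 4 matches the paper's intent exactly.
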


\begin{proof} Assume $\sum_{n=0}^{\infty}
s_n^2 < \infty$ and let $\{e_n\}_{n=0}^{\infty}$ be the standard basis of $\ell_2$. For every $n\in \NN_0$,
$s_{2n}=e_{2n}\mathbf{H} e_{2n}^{*}
 \geq \lambda_n$ and therefore $\lim_{n\to \infty} \lambda_n=0$.
\end{proof}

 We now particularize Proposition $1$ in the case of Toeplitz matrices; this case will be widely studied in next section:

\begin{corollary} Let $\mathbf{T}=(c_{j-i})_{i,j=0}^{\infty}$ be a Toeplitz HPD matrix verifying
$\lim_{n\to \infty} \lambda_n >0$.
If $\sum_{n=0}^{\infty} \vert c_n \vert^2 < \infty$ then $\mathbf{A}\mathbf{T}= \mathbf{T}\mathbf{A}=\mathbf{I}$.
\end{corollary}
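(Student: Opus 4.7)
The plan is to reduce the statement directly to Proposition \ref{prop1}; all that is required is to verify that the square-summability hypothesis on $(c_n)_{n=0}^\infty$ forces every row of the Toeplitz matrix $\mathbf{T}$ to lie in $\ell_2$.

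First I would fix an arbitrary row index $i \in \NN_0$. The $i$-th row of $\mathbf{T}$ has entries $\mathbf{T}[i,k]=c_{k-i}$ for $k=0,1,2,\dots$, so its squared $\ell_2$-norm is
\[
\sum_{k=0}^{\infty} |c_{k-i}|^2 \;=\; \sum_{m=-i}^{\infty} |c_m|^2 \;=\; \sum_{m=1}^{i}|c_{-m}|^2 \;+\; \sum_{m=0}^{\infty}|c_m|^2.
\]
Since $\mathbf{T}$ is Hermitian, $c_{-m}=\overline{c_m}$ and hence $|c_{-m}|=|c_m|$. The first (finite) sum is therefore bounded by $\sum_{m=1}^{\infty}|c_m|^2$, which is finite by hypothesis, and the second is finite by the same hypothesis. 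Thus $\sum_{k=0}^\infty |\mathbf{T}[i,k]|^2<\infty$ for every $i\in\NN_0$.

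Now the hypotheses of Proposition \ref{prop1} are satisfied: $\mathbf{T}$ is an HPD matrix with $\lim_{n\to\infty}\lambda_n>0$ and every row of $\mathbf{T}$ lies in $\ell_2$. Applying that proposition, $\mathbf{A}\mathbf{T}=\mathbf{T}\mathbf{A}=\mathbf{I}$, which is exactly the conclusion required.

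There is essentially no obstacle here: the only thing to be careful about is the book-keeping on negative indices of the Toeplitz symbol, which is handled by invoking the Hermitian condition $c_{-n}=\overline{c_n}$. Everything else is already provided by Proposition \ref{prop1}.
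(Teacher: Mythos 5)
Your proposal is correct and follows exactly the route the paper intends: the corollary is stated as an immediate particularization of Proposition \ref{prop1}, and the only content is the routine verification (which the paper leaves implicit) that each row of a Hermitian Toeplitz matrix with square-summable symbol coefficients lies in $\ell_2$, using $c_{-m}=\overline{c_m}$ to handle the finitely many negative indices. Nothing further is needed.
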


 We finish this section with some applications of the above results. In \cite{Hofmaier} the follo\-wing problem is studied: given a sequence of polynomials $\{P_n(z) \}_{n=0}^{\infty}$ with $P_0(z):=1$ and
 $\deg(P_n(z))=n$ does there exist a measure $\mu$ with support on $\CC$ such that:
 $$
 \int P_n(z)\overline{P_m}(z)
 d\mu= \delta_{n,m}?
$$

 Note that once we know the sequence $\{P_n(z) \}_{n=0}^{\infty}$ we have the description of the transition matrix $\mathbf{B}$, thus
with our approach the above problem can be reformulated
in the following terms: given an upper triangular matrix
$\mathbf{B}
=(b_{k,n})_{k,n=0}^{\infty}$ with $b_{n,n}>0$,  is $\mathbf{B}=\mathbf{B}(\mu)$ the transition matrix associated with a certain measure $\mu$ ? Moreover, in the case of positive answer one can
ask about the density of polynomials in the corresponding space $L^{2}(\mu)$; concerning this problem by \cite{EGT} and Lemma $1$
we have the following result:

\begin{corollary} Let $\{P_n(z)\}_{n=0}^{\infty}$ be a sequence of polynomials with $P_n(z)=\sum_{k=0}^{n} b_{k,n}z^k$ and $b_{n,n}>0$ for every
$n\in \NN_0$. If there is a compactly supported measure $\mu$
such that $\mathbf{B}(\mu)=(b_{k,n})_{k,n=0}^{\infty}$
and $P^{2}(\mu)=L^{2}(\mu)$ then  $\mathbf{B}(\mu)$ does not define a bounded operator on $\ell_2$.
\end{corollary}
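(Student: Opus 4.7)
The plan is to deduce this corollary directly by contraposition, combining the necessary condition for density of polynomials proved in \cite{EGT} with the equivalence established in Lemma \ref{lem1}. The statement is essentially a rewording of both results in the language of the transition matrix, so no genuinely new work is required.

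More precisely, I would argue as follows. Suppose, for the sake of contradiction, that the upper triangular matrix $\mathbf{B}(\mu)$ does define a bounded operator $\mathcal{B}$ on $\ell_2$. By the equivalence in Lemma \ref{lem1} (item (2) implies item (1)), this forces $\lim_{n\to\infty} \lambda_n(\mu) = \Vert \mathcal{B}\Vert^{-2} > 0$. On the other hand, the authors have recalled in the introduction that, as proved in \cite{EGT}, for a compactly supported measure $\mu$ on $\CC$ the condition $\lim_{n\to\infty} \lambda_n(\mu) = 0$ is necessary for the density of polynomials in $L^{2}(\mu)$, i.e.\ for $P^{2}(\mu)=L^{2}(\mu)$. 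Since we are assuming precisely that $P^{2}(\mu)=L^{2}(\mu)$, this necessary condition gives $\lim_{n\to\infty} \lambda_n(\mu) = 0$, contradicting the previous inequality.

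There is no real obstacle here: the corollary is the direct juxtaposition of a known theorem from \cite{EGT} with the (already proved) Lemma \ref{lem1}. The only things to verify when writing the final draft are that the cited result from \cite{EGT} is indeed stated under the same hypotheses (compactly supported measure, orthonormal polynomials induced by $\mathbf{M}(\mu)$) and that the appeal to Lemma \ref{lem1} uses the correct direction of the equivalence, namely that a bounded $\mathcal{B}$ forces $\lim_{n\to\infty}\lambda_n(\mu)>0$. Both checks are immediate from the excerpt.
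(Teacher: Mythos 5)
Your argument is exactly the one the paper intends: the corollary is stated as an immediate consequence of the necessary condition $\lim_{n\to\infty}\lambda_n(\mu)=0$ for $P^{2}(\mu)=L^{2}(\mu)$ from \cite{EGT} together with Lemma \ref{lem1}, and your contrapositive reading applies both results in the correct direction. The proposal is correct and coincides with the paper's (implicit) proof.
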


 We see two interesting cases of sequences of polynomials:

 {\it Case I. The sequence of polynomials $P_0(z)=1$ and $P_n(z)=b_nz^n$ with $b_n>0$}: in this case
$\mathbf{B}=(b_{i}\delta_{i,j})_{i=0}^{\infty}, \mathbf{A}=(b_{i}^{2}\delta_{i,j})_{i^,j=0}^{\infty}$ and $\mathbf{M}=(c_{i}\delta_{i,j})_{i,j=0}^{\infty}$
with $c_i=b_i^{-2}$ for all $i\geq 0$, all of them diagonal matrices.
By \cite{EGT} we have the following result:

\begin{corollary} Let $P_0(z):=1$ and $P_n(z)=b_nz^n$ with $b_n>0$. Assume that there exists a compactly supported measure $\mu$ such that $\mathbf{B}(\mu)=(b_{i}\delta_{i,j})_{i=0}^{\infty}$. If $P^{2}(\mu)=L^{2}(\mu)$ then $\lim_{n\to \infty} \frac{b_{n+1}}{b_n} \geq 1$ and
 $\lim_{n\to \infty} b_n=\infty$.
\end{corollary}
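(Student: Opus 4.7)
The plan is to reduce the statement to a question about the single moment sequence $c_n := \int |z|^{2n}\,d\mu$ and its asymptotics. Since the polynomials $P_n(z) = b_n z^n$ are assumed orthonormal in $L^2(\mu)$, the Gram matrix is forced to be diagonal, $\int z^n \bar z^m\,d\mu = \delta_{n,m}/b_n^2$, so $\mathbf{M}(\mu)$ is diagonal with entries $c_n = 1/b_n^2$. The hypotheses that $\mu$ is compactly supported and $P^{2}(\mu) = L^{2}(\mu)$ activate Corollary 2, which tells us that $\mathbf{B}(\mu)$ does not define a bounded operator on $\ell_2$; since $\mathbf{B}(\mu)$ is itself diagonal, this is equivalent to $\sup_n b_n = \infty$.

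Next I would establish two facts about $(c_n)$. First, H\"older's inequality gives log-convexity $c_n^2 \leq c_{n-1}\, c_{n+1}$, so $c_{n+1}/c_n$ is non-decreasing and consequently $b_{n+1}/b_n = \sqrt{c_n/c_{n+1}}$ is non-increasing. Second, the compactness of $\sop(\mu)$ yields the standard $L^n \to L^\infty$ asymptotic $c_n^{1/n} \to R^2$, where $R := \max\{|z| : z \in \sop(\mu)\}$, so that $b_n^{1/n} \to R^{-1}$. A Stolz--Cesaro argument applied to $\log b_n$ (monotonicity of the ratio forces $\lim b_{n+1}/b_n$ to exist, and the Cesaro mean of $\log(b_{k+1}/b_k)$ equals $n^{-1}\log b_n$) identifies the two limits, giving $\lim_n b_{n+1}/b_n = R^{-1}$, with $b_{n+1}/b_n \geq R^{-1}$ for every $n$ by monotonicity.

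To finish, the case $R > 1$ is ruled out: it would force $b_{n+1}/b_n \to R^{-1} < 1$, hence eventual geometric decay of $b_n$, contradicting $\sup_n b_n = \infty$. Therefore $R \leq 1$ and $\lim_n b_{n+1}/b_n = R^{-1} \geq 1$, which is the first conclusion. For the second, if $R < 1$ then $b_{n+1}/b_n \geq R^{-1} > 1$ gives geometric growth immediately; if $R = 1$ then $b_{n+1}/b_n \geq 1$, so $(b_n)$ is non-decreasing, and being unbounded it must tend to $\infty$.

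The main delicate point is the identification of the two asymptotic limits in the second paragraph: convergence of $b_n^{1/n}$ does not in general imply convergence of $b_{n+1}/b_n$, and it is precisely the log-convexity of the moments (forcing monotonicity of the ratio) that lets Stolz--Cesaro pin down $\lim b_{n+1}/b_n = R^{-1}$. Without this quantitative identification one would only obtain the qualitative $\sup_n b_n = \infty$ coming from Corollary 2, not the sharp lower bound $\geq 1$.
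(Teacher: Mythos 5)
Your proof is correct and follows essentially the same route as the paper: the moment matrix is diagonal with $c_{n,n}=b_n^{-2}$, the Cauchy--Schwarz log-convexity of the moments makes $b_{n+1}/b_n$ non-increasing, and Corollary 2 forces $\sup_n b_n=\infty$, from which both conclusions follow exactly as you argue. Your additional identification $\lim_{n\to\infty} b_{n+1}/b_n=R^{-1}$ via $c_n^{1/n}\to R^2$ and Stolz--Ces\`aro is correct but not needed; the paper stops at the qualitative observation that a limit ratio below $1$ would force $b_n\to 0$, contradicting unboundedness.
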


\begin{proof} Since the the support of $\mu$ is bounded
we have that $\limsup_{n\to \infty} \frac{c_{n+1,n+1}}{c_{n,n}} < \infty$. Moreover, if $\mathbf{M}$ is a moment matrix the entries verifies the Cauchy-Schwartz condition $c_{n,n}^{2} \leq c_{n-1,n-1}c_{n+1,n+1}$ for all $n\in \NN$, in particular $\{\frac{b_n}{b_{n+1}}\}_{n=1}^{\infty}$ is a non decreasing sequence and  $\lim_{n\to \infty} \frac{b_n}{b_{n+1}} \leq \infty$.  By \cite{EGT} and Corollary $2$ it follows that $\mathbf{B}$ does not define a bounded operator and consequently
$\limsup b_n=\infty$; then
$\lim_{n\to \infty} \frac{b_{n+1}}{b_n} \geq 1$ and $\lim_{n\to \infty} b_n=\infty$.
\end{proof}

 {\it Case II: The sequence of polynomials $P_0(z):=1$ and $P_n(z)=b_nz^{n-1}(z-1)$ with $b_n>0$}. The matrices $\mathbf{B}$ and $\mathbf{A}$ are:
\[
\mathbf{B}
=\left (
\begin{array}{ccccc}
1 & -b_1& 0 & 0 & \ldots \\
0  & b_1 & -b_2 & 0 & \ldots \\
 0 & 0 & b_2 & -b_3 & \ldots \\
 0  & 0 & 0 & b_3  & \ldots \\
  \vdots & \vdots & \vdots & \vdots & \ddots
\end{array}
\right )
 \qquad
\mathbf{A}=
\left (
\begin{array}{ccccc}
1+b_1^2 & -b_1^2 & 0 & 0 & \ldots \\
-b_1^2  &  b_1^2+b_2^2 &-b_2^2 & 0 & \ldots \\
 0 & -b_2^2 & b^2_2+ b_3^2 & -b_3^2 & \ldots \\
 0  & 0 & 0 & -b_3^2  & \ldots \\
  \vdots & \vdots & \vdots & \vdots & \ddots
\end{array}
\right ).
\]
 It can be checked that $\mathbf{A}$ is the classical inverse of the Hermitian matrix $\mathbf{M}$ given by
\[\mathbf{M}=  \left (
\begin{array}{ccccc}
c_0 & c_0 & c_0 & c_0 & \ldots \\
c_0  & c_1 & c_1 & c_1 & \ldots \\
 c_0 & c_1 & c_2 & c_2 & \ldots \\
 c_0  & c_1 & c_2 & c_3  & \ldots \\
  \vdots & \vdots & \vdots & \vdots & \ddots
\end{array}
\right )\]
 with $c_n=\sum_{k=0}^{n} \frac{1}{b_k^2}$.
In \cite{Hofmaier} Example 4.5.3. the authors asked
if there exists a measure for $\{P_n(z)\}_{n=0}^{\infty}$ in the case that $b_n=\frac{1}{\sqrt{(n-1)!}}$ for $n\geq 1$. The answer is negative, since in this case the matrix
$\mathbf{M}=(c_{i,j})_{i,j=0}^{\infty} $ is

\[\mathbf{M}=  \left (
\begin{array}{ccccc}
1 & 1 & 1 & 1 & \ldots \\
1  & 2 & 2 & 2 & \ldots \\
 1 & 2 & 3 & 3 & \ldots \\
1  & 2 & 3 & 4  & \ldots \\
  \vdots & \vdots & \vdots & \vdots & \ddots
\end{array}
\right )\]
 which obviously does not verify the condition  $c_{11}^{2} \leq c_{00}c_{22}$; consequently,
$\mathbf{M}$ is not a moment matrix.

\section{Inversion of Toeplitz moment matrices }

 Following Barria and Halmos \cite{Barria-Halmos} a bounded operator $\mathcal{A}:\mathcal{H} \to \mathcal{H}$ in a Hilbert space
$\mathcal{H}$ is a {\it weakly asymptotic Toeplitz operator } if the sequence $\mathcal{S}
^{*n}\mathcal{A}\mathcal{S}^{n}$ is strongly convergent, where $\mathcal{S}$ is the forward shift and $\mathcal{S}^{*}$ its adjoint. The limit is clearly a Toeplitz operator. In an analogous way we introduce the following definition:

\begin{definition} An infinite  matrix $\mathbf{A}=(a_{i,j})_{i,j=0}^{\infty}$ is a {\it weakly asymptotic Toeplitz matrix} if for every
$k\in \ZZ$ there exists
$
\lim_{n\to \infty} a_{n,n+k} =\alpha_k.
$
In such a case we denote by $\Lim(\mathbf{A})=(\alpha_{i-j})_{i,j=0}^{\infty}$ which is clearly a Toeplitz matrix.
\end{definition}
  It is easy
to show that if
$\mathcal{A}$ is a weakly asymptotic Toeplitz operator on a Hilbert $\mathcal{H}$ space then the matrix representation with respect to any orthonormal basis in $\mathcal{H}$ is a weakly asymptotic Toeplitz matrix.

 The main result in this section is:

\begin{theorem} \label{teorem1}
Let $\mathbf{T}$ be an infinite HPD  Toeplitz matrix and let $\mathbf{B}=(b_{k,n})_{k,n=0}^{\infty}$ be the transition matrix associated with $\mathbf{T}$. Assume that the matrix $\mathbf{A}=\overline{\mathbf{B}}\mathbf{B}^t=(a_{i,j})_{i,j=0}^{\infty}$ exists. Then:
\begin{enumerate}
\item $\mathbf{B}$ is a weakly  asymptotic Toeplitz matrix, i.e. for every $k\in \NN_0$ there exists
$
\lim_{n\to \infty} b_{n-k,n} =\beta_k.
$
\item $a_{i,k}= \sum_{j=0}^{i} \beta_j\beta_{k-j}$ for $i,k \in \NN_0$ with $i\leq k$.
\medskip
\item If $\sum_{k=0}^{\infty} \beta_k^2< \infty$
then
the matrix $\mathbf{A}$ is weakly asymptpotic Toeplitz and
$$
\lim_{k\to \infty} a_{k,k+j}= \sum_{i=0}^{\infty}\beta_i\beta_{i+j},
$$
 where the series above is absolutely convergent.
\end{enumerate}
\end{theorem}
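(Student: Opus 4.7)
The key structural tool in my approach is the persymmetry of finite Toeplitz inverses. Since $\mathbf{T}$ is an HPD Hermitian Toeplitz matrix, each truncation $\mathbf{T}_n$ is Hermitian and persymmetric, and hence so is $\mathbf{T}_n^{-1}=\overline{\mathbf{B}_n}\mathbf{B}_n^t$; thus $\mathbf{T}_n^{-1}[i,j] = \mathbf{T}_n^{-1}[n-j, n-i]$ for all $i,j\le n$. The second ingredient I rely on is the existence of $\mathbf{A}$, which by Remark~2 means every row of $\mathbf{B}$ lies in $\ell_2$; then the series $a_{i,j} = \sum_{l\ge \max(i,j)} \overline{b_{i,l}}\,b_{j,l}$ is absolutely convergent and, for fixed $i,j$, $\mathbf{T}_n^{-1}[i,j] \to a_{i,j}$ as $n\to\infty$. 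I will combine these two observations throughout.

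For part~(1), the plan is to exploit the identity $\mathbf{T}_n^{-1}[n-k,n] = \overline{b_{n-k,n}}\,b_{n,n}$, which holds because only the $l=n$ term survives in $\sum_l \overline{b_{n-k,l}}\,b_{n,l}$. Applying persymmetry this equals $\mathbf{T}_n^{-1}[0,k]$, which tends to $a_{0,k}$. Specialising first to $k=0$ yields $b_{n,n}^2 \to a_{0,0}$, and since $b_{n,n}>0$ this gives $b_{n,n} \to \sqrt{a_{0,0}}=:\beta_0>0$. Then for $k\ge 1$, dividing $\overline{b_{n-k,n}}\,b_{n,n}\to a_{0,k}$ by $b_{n,n}\to\beta_0$ produces $b_{n-k,n}\to\overline{a_{0,k}}/\beta_0=:\beta_k$.

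For part~(2), I would apply persymmetry to $\mathbf{T}_n^{-1}[i,k]$ (with $i\le k$) to rewrite it as $\mathbf{T}_n^{-1}[n-k,n-i] = \sum_{l=n-i}^{n}\overline{b_{n-k,l}}\,b_{n-i,l}$, and then substitute $l = n-j$ to obtain the fixed-length sum $\sum_{j=0}^{i}\overline{b_{n-k,n-j}}\,b_{n-i,n-j}$. For each fixed $j$, invoking part~(1) with $m := n-j\to\infty$ shows $b_{n-k,n-j}\to\beta_{k-j}$ and $b_{n-i,n-j}\to\beta_{i-j}$; passing to the limit on both sides is immediate because the sum has only $i+1$ terms, and yields the convolution-type formula of~(2).

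For part~(3), setting $i = k$ and replacing $k$ by $k+j$ in~(2) gives the representation $a_{k,k+j} = \sum_{l=0}^{k}\overline{\beta_{j+l}}\,\beta_l$, and the hypothesis $\sum_k|\beta_k|^2 < \infty$ together with Cauchy--Schwarz makes $\sum_{l=0}^{\infty}\overline{\beta_{j+l}}\,\beta_l$ absolutely convergent. The tail estimate $\bigl|\sum_{l>N}\overline{\beta_{j+l}}\,\beta_l\bigr| \le \bigl(\sum_{l>N}|\beta_{j+l}|^2\bigr)^{1/2}\bigl(\sum_{l>N}|\beta_l|^2\bigr)^{1/2}$ is uniform in~$k$, so I may let $k\to\infty$ to conclude. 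The main technical point throughout is promoting the pointwise convergence of $\mathbf{T}_n^{-1}[i,j]$ in parts~(1)--(2) to the interchange of limits $\lim_{k\to\infty}\lim_{n\to\infty}$ needed in~(3); the square-summability of $(\beta_k)$ is precisely what closes this gap.
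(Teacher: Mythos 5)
Your proof is correct and follows essentially the same route as the paper's: persymmetry of the finite Toeplitz inverses $\mathbf{T}_n^{-1}=\overline{\mathbf{B}_n}\mathbf{B}_n^t$, the identity $\mathbf{T}_n^{-1}[n-k,n]=\overline{b_{n-k,n}}\,b_{n,n}$, division by $b_{n,n}\to\beta_0>0$, a finite reindexed sum for part (2), and Cauchy--Schwarz on the tails for part (3). Your observation that $\mathbf{T}_n^{-1}[i,j]\to a_{i,j}$ already follows from the mere existence of $\mathbf{A}$ (rows of $\mathbf{B}$ in $\ell_2$), rather than from Lemma 2's stronger hypothesis $\lim_n\lambda_n>0$ as the paper cites, is a minor but welcome clarification.
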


\begin{proof}

 We first show that there exists $\lim_{n\to \infty} b_{n,n}$. Indeed, since
 $\mathbf{A}$ exists and $\mathbf{T}_n^{-1}=\overline{\mathbf{B}}_n\mathbf{B}_n^{t}$
$$
a_{0,0}=\sum_{k=0}^{\infty} \vert b_{0,k}\vert^2 =
 \lim_{n\to \infty} \sum_{k=0}^{n} \vert b_{0,k}\vert^2 =\lim_{n\to \infty} \mathbf{T}_n^{-1}[0,0].
 $$
 Using that the matrix $T_n^{-1}$ is persymmetric it follows that $\mathbf{T}_{n}^{-1}[0,0]= \mathbf{T}_n^{-1}[n,n]$ and therefore
$$
a_{0,0} = \lim_{n\to \infty}\mathbf{ T}_n^{-1}[n,n]= \lim_{n\to \infty} b_{n,n}^2.
$$
 Moreover $\beta_0=\lim_{n\to \infty} b_{n,n} =  \sqrt{a_{0,0}}>0
$ since $b_{n,n} >0$ for each $n\in \NN_0$.
\medskip
 Now we prove the existence of  $\lim_{n\to \infty} b_{n-k,n}$ for every $k\in \NN$. Indeed, using again that $\mathbf{T}_n^{-1}$ is persymmetric
$$
a_{0,k}= \lim_{n\to \infty} \mathbf{T}_n^{-1}[0,k]= \lim_{n\to \infty} \mathbf{T}_n^{-1}[n-k,n] = \lim_{n\to \infty} \overline{b}_{n-k,n}b_{n,n}.
$$
 Since $\lim_{n\to \infty} b_{n,n} =\beta_0 >0$ we have that there exists
$
\lim_{n\to \infty} b_{n-k,n}
$ and moreover,
$$
\lim_{n\to \infty} b_{n-k,n} = \dfrac{\overline{a}_{0,k}}{\beta_0}.
$$
 In a general way we can determinate all of the entries of the matrix $\mathbf{A}$. Indeed,  let $i,k\in \NN_0$ be fixed with $i\leq k$ then
$$
\mathbf{T}_{n}^{-1}[i,k]= \mathbf{T}_{n}^{-1}[n-i,n-k]=
(\overline{\mathbf{B}_n}\mathbf{B}_n^{t})[i,k]=
\sum_{j=0}^{i} \overline{b}_{n-i,n-i+j}b_{n-k,n-i+j}.
$$
 By passing to the limit and using Lemma $2$
$$
a_{i,k}
= \lim_{n\to \infty}\mathbf{ T}_{n}^{-1}[i,k]=
\sum_{j=0}^{i} \overline{\beta}_j\beta_{k+j-i} = \left(
                                              \begin{array}{llll}
                                               \overline{\beta}_0 & \overline{\beta}_1 & \cdots & \overline{\beta}_i \\
                                              \end{array}
                                            \right)\left(
                                                     \begin{array}{c}
                                                       \beta_{k-i} \\
                                                       \vdots \\
                                                       \beta_{k-1} \\
                                                       \beta_k \\
                                                     \end{array}
                                                   \right).
$$
 Thus, we have the description of $\mathbf{A}$. Note that the entries of the main diagonal of $\mathbf{A}$ are given by
$
a_{k,k}=
\sum_{i=0}^{k} \beta_i^{2}.
$

 In order to prove (3) let $j\in \NN_0$ be fixed and $k\in \NN$, then
$$
a_{k,k+j}= \left(
                                              \begin{array}{llll}
                                                \overline{\beta}_0 & \overline{\beta}_1 & \cdots & \overline{\beta}_k \\
                                              \end{array}
                                            \right)\left(
                                                     \begin{array}{c}
\mathcal{}                                                       \beta_{k+j-k} \\
                                                       \vdots \\
                                                       \beta_{k-1} \\
                                                       \beta_{k+j} \\
                                                     \end{array}
                                                   \right) = \sum_{i=0}^{k}\overline{\beta_i}\beta_{i+j}.
$$
 The series $\sum_{i=0}^{\infty}\beta_i\beta_{i+j}$ is absolutely convergent since $\sum_{k=0}^{\infty} \beta_k^2 < \infty$, and therefore there exists
$$
\lim_{k\to \infty} a_{k,k+j} = \sum_{i=0}^{\infty}\overline{\beta}_i\beta_{i+j}.
$$
\end{proof}

\begin{remark} Note that under the assumptions of Theorem $1$ for the matrix product
$\mathbf{A}=\overline{\mathbf{B}}\mathbf{B}^{t}$ it holds that
$$
\Lim(\mathbf{A}) =  \Lim(\overline{\mathbf{B}})\Lim(\mathbf{B}^t).
$$
\end{remark}
\medskip
 It is well known (see e.g. \cite{aki}) that for $\mathbf{T}$ being an infinite HPD Toeplitz matrix there exists a measure
$\nu$ with support on $\mathbb{T}$ such that $\mathbf{T}=\mathbf{T}(\nu)$. With this approach we
have the following consequences of Proposition above:

\begin{corollary} Let $\nu$ be a measure with support on $
\mathbb{T}$ and let
$P_n(z)=\sum_{k=0}^{n} b_{n-k,n}(\nu)z^k$ be the orthonormal polynomials associated with $\nu$. Assume that
$\lim_{n\to \infty} \lambda_n(\nu)>0$,
then for every $k\in \NN_0$ there exist
$
\lim_{n\to \infty} b_{n-k,n}(\nu).
$
\end{corollary}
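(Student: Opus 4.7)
The plan is to recognize the corollary as an immediate specialization of Theorem \ref{teorem1}(1), once its hypothesis is checked. First, I would recall the standard fact (already invoked in the paragraph preceding Theorem \ref{teorem1}) that for any measure $\nu$ with support on the unit circle $\mathbb{T}$, the moment matrix $\mathbf{M}(\nu)$ is a Toeplitz matrix, since its $(i,j)$ entry $\int_{\mathbb{T}} z^{i}\overline{z}^{j}\, d\nu(z) = \int_{\mathbb{T}} z^{i-j}\, d\nu(z)$ depends only on $i-j$. Writing $\mathbf{T}(\nu) = \mathbf{M}(\nu)$ places the corollary squarely inside the hypotheses of Theorem \ref{teorem1}, except for the technical requirement that the formal product $\mathbf{A}(\nu)=\overline{\mathbf{B}(\nu)}\mathbf{B}(\nu)^{t}$ be well defined.

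Next I would invoke Lemma \ref{lem2} to dispense with that requirement: the assumption $\lim_{n\to\infty}\lambda_n(\nu) > 0$ is exactly what the lemma requires to conclude that $\mathbf{A}(\nu)=\overline{\mathbf{B}(\nu)}\mathbf{B}(\nu)^{t}$ exists (and even defines a bounded operator on $\ell_2$, although only existence is needed here). Applying part (1) of Theorem \ref{teorem1} to $\mathbf{T}(\nu)$ then immediately gives that $\mathbf{B}(\nu)$ is weakly asymptotic Toeplitz, which is exactly the statement that $\lim_{n\to\infty} b_{n-k,n}(\nu)$ exists for every $k\in \NN_0$.

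No substantive obstacle arises: the real work was carried out in the proof of Theorem \ref{teorem1}, where the persymmetry $\mathbf{T}_n^{-1}[i,k] = \mathbf{T}_n^{-1}[n-k,n-i]$ was what translated the existence of the entries $a_{i,k}$ of $\mathbf{A}(\nu)$ (guaranteed by Lemma \ref{lem2}) into the existence of the limits of the top-diagonal coefficients $b_{n-k,n}(\nu)$. The corollary is simply the transcription of that combinatorial mechanism into the measure-theoretic language, via the well-known identification of HPD Toeplitz matrices with positive Borel measures on $\mathbb{T}$.
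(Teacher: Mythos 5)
Your proposal is correct and follows exactly the route the paper intends: the corollary is stated without proof as an immediate consequence of Theorem \ref{teorem1}, and your chain (a measure on $\mathbb{T}$ has a Toeplitz moment matrix; $\lim_{n\to\infty}\lambda_n(\nu)>0$ guarantees via Lemma \ref{lem2} that $\mathbf{A}(\nu)=\overline{\mathbf{B}(\nu)}\mathbf{B}(\nu)^{t}$ exists; then part (1) of the theorem applies) is precisely the missing justification. No gaps.
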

 Using the results in \cite{Grenander-Szego} and Theorem \ref{teorem1} we have the following result:

\begin{corollary} Let $\nu$ be a measure with support on $\mathbb{T}$ which is absolutely continuous with respect to
${\bf m}$ and $d\nu(e^{i\theta})= \frac{w(\theta)}{2\pi}d\theta$ with $w(\theta) \in L^{1}[0,2\pi]$. If
$\essinf(w(\theta)) >0$ and $w(\theta) \in L^{2}[0,2\pi]$ then $\mathbf{A}(\nu)$ exists and
$$
\mathbf{T}(\nu)\mathbf{ A}(\nu) = \mathbf{ A}(\nu)\mathbf{T}(\nu)= \mathbf{I}.
$$
\end{corollary}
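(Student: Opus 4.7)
The plan is to verify the two hypotheses of Corollary 1 for $\mathbf{T}=\mathbf{T}(\nu)$; once both are established, the conclusion $\mathbf{T}(\nu)\mathbf{A}(\nu)=\mathbf{A}(\nu)\mathbf{T}(\nu)=\mathbf{I}$ follows immediately. The two things to check are that $\lim_{n\to\infty}\lambda_n(\nu)>0$ and that the sequence of Toeplitz symbols $(c_n)_{n\geq 0}$ belongs to $\ell_2$.

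For the eigenvalue bound I would use the Rayleigh-quotient characterization of $\lambda_n(\nu)$. Given $v\in\CC^{n+1}$ with associated polynomial $p(z)=\sum_{k=0}^n v_k z^k$, the orthonormality of $\{e^{ik\theta}\}_{k=0}^n$ in $L^2(d\theta/(2\pi))$ yields
$$vv^{*}=\int_0^{2\pi}|p(e^{i\theta})|^2\frac{d\theta}{2\pi},\qquad v\mathbf{T}_n(\nu)v^{*}=\int_0^{2\pi}|p(e^{i\theta})|^2\frac{w(\theta)}{2\pi}\,d\theta.$$
Pulling $\essinf(w)$ out of the second integral gives $\lambda_n(\nu)\geq\essinf(w)>0$ for every $n$, whence $\lim_{n\to\infty}\lambda_n(\nu)>0$. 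By Lemma 2 this already guarantees that $\mathbf{A}(\nu)$ exists and defines a bounded operator on $\ell_2$.

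For the $\ell_2$ condition on the symbols I would identify the Toeplitz entries with Fourier coefficients of the weight: since
$$c_k=\int e^{-ik\theta}\frac{w(\theta)}{2\pi}\,d\theta=\hat w(k)$$
and $w\in L^2[0,2\pi]$, Parseval's identity yields
$$\sum_{k\in\ZZ}|c_k|^2=\frac{1}{2\pi}\int_0^{2\pi}|w(\theta)|^2\,d\theta<\infty,$$
so in particular $\sum_{k\geq 0}|c_k|^2<\infty$. With both hypotheses of Corollary 1 verified, the corollary delivers $\mathbf{T}(\nu)\mathbf{A}(\nu)=\mathbf{A}(\nu)\mathbf{T}(\nu)=\mathbf{I}$.

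No essential obstacle arises in this argument: it is an efficient packaging of two classical facts (the elementary lower bound on Toeplitz eigenvalues by the essential infimum of the symbol, and Parseval's identity for $L^2$ symbols) into the inversion criterion already established in Corollary 1 of the preceding section. The only care required is the bookkeeping of the $1/(2\pi)$ normalization relating $\nu$ to the normalized Lebesgue measure on $\mathbb{T}$.
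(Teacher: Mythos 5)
Your proof is correct and follows essentially the same route as the paper: verify $\lim_{n\to\infty}\lambda_n(\nu)>0$ and the $\ell_2$ summability of the moments via Parseval, then invoke the inversion criterion of Proposition 1 (equivalently its Toeplitz Corollary). The only difference is that you establish the eigenvalue bound $\lambda_n(\nu)\geq \essinf(w)$ directly by the Rayleigh quotient, where the paper cites the Grenander--Szeg\"{o} equivalence; your version is self-contained and suffices since only that one inequality is needed.
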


\begin{proof} By \cite{Grenander-Szego} it follows that the condition $\essinf (w(\theta))>0$ is equivalent to the fact
$\lim_{n\to \infty} \lambda_n(\nu)>0$. Consequently by Lemma \ref{lem2}
there exists $\mathbf{A}(\nu)$. Moreover,
for every $k\in \NN_0$
$$
c_{k} = \int_{\mathbb{T}} z^k d\nu(z)= \dfrac{1}{2\pi} \int_{0}^{2\pi} e^{ik}w(\theta)d\theta.
$$
 Since $w(\theta)\in L^{2}[0,2\pi]$  the Fourier coefficients of $w(\theta)$ belong to $\ell_2$ and
consequently the moment matrix $\mathbf{T}(\nu)$ verifies the assumptions in Proposition \ref{prop1} and it follows
that $\mathbf{A}(\nu)$ is the classical inverse of $\mathbf{T}(\nu)$ as we required.
\end{proof}

 Recall that the Hardy-Hilbert $\mathbf{H}^{2}$ space is the completion of the space $\mathbb{P}[z]$ in $L^{2}({\bf m})$; in this space the sequence
$\{z^n\}_{n=0}^{\infty}$ is an orthonormal basis.
 Let  $\varphi \in L^{\infty}(\mathbb{T})$ and consider the bounded operator
$\mathcal{M}_{\varphi} :L^{\infty}(\mathbb{T}) \to L^{\infty}(\mathbb{T})$ given by $\mathcal{M}_{\varphi}(g)=\varphi g$. Let $\mathcal{P}$ be the orthogonal proyection from $L^{\infty}(\mathbb{T})$ to $\mathbf{H}^{2}$, then $\mathcal{T}_{\varphi}:=\mathcal{P}\mathcal{M}_{\varphi}:\mathbf{H}^{2} \to \mathbf{H}^{2}$ is a bounded
Toeplitz operator which matrix representation is a Toeplitz matrix $\mathbf{T}_{\varphi}$. On the other hand, it is well known ( see e.g. \cite{Avendano}) that if
$\mathbf{T}=(t_{m-n})_{m,n=0}^{\infty}$ defines a bounded Toeplitz operator
$\mathcal{T}: \mathbf{H}^{2} \to \mathbf{H}^{2}$ there exists a function $\varphi \in L^{\infty}(\mathbb{T})$  such that for each $n\in \ZZ$:
$$
t_n =\dfrac{1}{2\pi} \int_{0}^{2\pi} \varphi(e^{i\theta})e^{-in\theta}d\theta .
$$
 In this case, $\mathcal{T}=\mathcal{T}_{\varphi}:=\mathcal{P}\mathcal{M}_{\varphi}$ and the matrix representation with respect the basis $\mathfrak{B}=\{z^n\}_{n=0}^{\infty}$
in $\mathbf{H}^{2}$ is the matrix $\mathbf{T}_{\varphi}=(t_{m-n})_{m,n=0}^{\infty}$. With this approach
Theorem $1$ can be reformulated in the following way:

\begin{prop} \label{prop2}
 Let $\mathbf{T}_{\varphi}=(c_{i-j})_{i,j=0}^{\infty}$ be a Toeplitz HPD matrix associated with  $\varphi \in L^{2}(\mathbb{T})$ and $\essinf   \varphi(z) >0$. Then the matrix
   $\mathbf{A}_{\varphi} =\overline{\mathbf{B}}\mathbf{B}^{t}$ exists and
   is the classical inverse matrix of $\mathbf{T}_{\varphi}$, i.e. $\mathbf{A}_{\varphi}\mathbf{T}_{\varphi}=\mathbf{T}_{\varphi}\mathbf{A}_{\varphi}=\mathbf{I}$.
\end{prop}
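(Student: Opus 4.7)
The plan is to reduce Proposition~\ref{prop2} to the machinery already developed. The key observation is that, since $\varphi \in L^{2}(\mathbb{T}) \subset L^{1}(\mathbb{T})$ and $\essinf \varphi > 0$, the function $\varphi$ is a.e.\ positive and integrable, so $d\nu(e^{i\theta}) = \varphi(e^{i\theta})\,\frac{d\theta}{2\pi}$ defines a finite positive Borel measure on $\mathbb{T}$, absolutely continuous with respect to ${\bf m}$. Using the Fourier representation $c_n = \frac{1}{2\pi}\int_{0}^{2\pi} \varphi(e^{i\theta}) e^{-in\theta}\,d\theta$ recalled just before the statement, one has $\mathbf{T}_{\varphi} = \mathbf{T}(\nu)$, so Proposition~\ref{prop2} is the symbol-theoretic face of the preceding absolutely continuous corollary proved via Grenander--Szeg\"{o}; the whole task amounts to verifying the two hypotheses of Proposition~\ref{prop1}.

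First, I would invoke the Grenander--Szeg\"{o} theorem cited in \cite{Grenander-Szego} exactly as in the preceding corollary: the condition $\essinf \varphi > 0$ is equivalent to $\lim_{n\to\infty}\lambda_n(\nu) > 0$. Hence Lemma~\ref{lem2} applies and shows that the matrix $\mathbf{A}_{\varphi} = \overline{\mathbf{B}}\mathbf{B}^{t}$ exists and is the representation, with respect to the standard basis of $\ell_2$, of a bounded operator $\mathcal{A}_{\varphi}$.

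Second, since $\varphi \in L^{2}(\mathbb{T})$, Parseval's identity gives $\sum_{n\in\ZZ} |c_n|^{2} = \|\varphi\|_{L^{2}(\mathbb{T})}^{2} < \infty$, so every row and every column of $\mathbf{T}_{\varphi}$ lies in $\ell_2$; in particular the hypothesis $\sum_{k=0}^{\infty} |c_{i,k}|^{2} < \infty$ required by Proposition~\ref{prop1} (and recorded in its Toeplitz specialization, Corollary~1) is satisfied. An application of Proposition~\ref{prop1} then yields $\mathbf{A}_{\varphi}\mathbf{T}_{\varphi} = \mathbf{T}_{\varphi}\mathbf{A}_{\varphi} = \mathbf{I}$, which is the desired conclusion. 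No step is genuinely hard here: the content of the statement lies in the translation between the symbol $\varphi$ and the density of the associated measure, which allows the Grenander--Szeg\"{o} criterion and Parseval's identity to be invoked in tandem.
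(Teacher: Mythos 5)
Your proposal is correct and follows essentially the same route as the paper: Proposition~\ref{prop2} is presented there as a reformulation of the preceding corollary on absolutely continuous measures, whose proof is exactly your chain (Grenander--Szeg\"{o} to get $\lim_{n\to\infty}\lambda_n(\nu)>0$, Lemma~\ref{lem2} for the existence of $\mathbf{A}_{\varphi}$, Parseval for the $\ell_2$ rows, and then Proposition~\ref{prop1}). Your explicit identification $\mathbf{T}_{\varphi}=\mathbf{T}(\nu)$ with $d\nu=\varphi\,\frac{d\theta}{2\pi}$ is the translation the paper leaves implicit, and it is carried out correctly.
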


 In the following example we apply proposition  above to obtain the inverse matrix of a family of Toeplitz matrices in terms of the transition matrices:

\begin{ejem} Let $0<a<1$ be fixed and let $\mathbf{T}$ be
$$
\mathbf{T}= \dfrac{1}{1-a^2} \left(
     \begin{array}{ccccc}
       1+a^2 & a & 0 & 0 & \hdots \\
       a  & 1+a^2 & a & 0 & \hdots \\
       0 & a & 1+a^2 &  a & \hdots \\
       0 & 0 & a & a^2+1 & \hdots \\
       \vdots & \vdots  & \vdots  & \vdots
        & \ddots \\
     \end{array}
   \right).
$$
 $\mathbf{T}$ is an HPD Toeplitz matrix since it can be easily proved that
$\vert T_n \vert = \frac{1-a^{2(n+1)}}{(1-a^2)^{n}(1-a^2)}>0$ for every $n\in \NN_0$. Moreover,  $\mathbf{T}=\mathbf{T}_{\varphi}$ with continuous
symbol $\varphi(e^{i\theta})= \frac{1+a^2 +2a \cos(\theta)}{1-a^2}$
 verifying
$\inf_{\theta \in [0,2\pi]} \varphi(e^{i\theta}) =\frac{1-a}{1+a}>0$, therefore $\lim_{n\to \infty} \lambda_n >0$.

 By computing the Cholesky factorization of the matrix $\mathbf{T}$ and by an induction argument we can determinate the transition matrix $\mathbf{B}$ where if $k \leq n$ we have:
$$
b_{n-k,n} = \sqrt{1-a^2}(-1)^k a^{k}\frac{1-a^{2(n-k+1)}}{\sqrt{(1-a^{2(n+1)})(1-a^{2(n+2)})}} .
$$
 By passing to the limit we have that for every $k\in \NN_0$,
$$
\lim_{n\to \infty} b_{n-k,n}= \beta_k = (-1)^ka^{k}\sqrt{1-a^2}.
$$
 Once we have the coefficients $\beta_k's$ we may construct the infinite matrix $\mathbf{A}_{\varphi} $ and the Toeplitz matrix
$Lim(\mathbf{A})$. In particular, if $Lim(\mathbf{A})= (\alpha_{i-j})_{i,j=0}^{\infty}$ we have:
$$
\alpha_0= \lim_{k\to \infty} \mathbf{A}[k,k]= \lim_{k\to \infty} \sum_{j=0}^{k} \beta_j^2 = \sum_{k=0}^{\infty} (-1)^{2k}(1-a^2)a^{2k}=
(1-a^2)\sum_{k=0}^{\infty} a^{2k}=1
$$
$$
\alpha_n = \lim_{k\to \infty} \mathbf{A}[k,k+n] = \sum_{k=0}^{\infty} \overline{\beta}_k\beta_{k+n} =
\sum_{k=0}^{\infty} (-1)^k a^{k} \sqrt{1-a^2} (-1)^{k+n} a^{k+n} \sqrt{1-a^2}=
$$
$$
 (-1)^na^{n}(1-a^2)\sum_{k=0}^{\infty} a^{2k}= (-1)^na^{n}.
$$
 Therefore the inverse matrix $\mathbf{A}_{\varphi}$ of $\mathbf{T}_{\varphi}$  is
 \[ \mathbf{A}
 _{\varphi}  = \left (
 \begin{array}{ccccc}
 1-a^2 &  -a(1-a^2) & a^2(1-a^2) & -a^{3}(1-a^2) & \ldots \\
 -a(1-a^2) & 1-a^4 & -a(1-a^4) & a^2(1-a^4)  & \ldots \\
 a^2(1-a^2) & -a(1-a^4) & 1-a^6 &- a(1-a^6)& \ldots \\
 -a^{3}(1-a^2) & a^2(1-a^4)& -a(1-a^6) & 1-a^8 & \ldots \\
  \vdots & \vdots & \vdots & \vdots & \ddots
 \end{array}
 \right )\]
 and
$$
\Lim(\mathbf{A}_{\varphi}) = \left(
                   \begin{array}{ccccccc}
                     1 & -a & a^2 & -a^{3} & a^4 & -a^{5} & \hdots
                                                                              \\
                     -a & 1 & -a & a^2 & -a^{3} & a^4 & \hdots \\
                     a^2 & -a & 1 & -a & a^2 & -a^{3} & \hdots \\
                     -a^{3} & a^2 & -a & 1 & -a & a^2 & \hdots \\
                     a^4 & -a^{3} & a^2 & -a & 1 & -a & \hdots \\
                     -a^{5} & -a^{3} &  & a^2 & -a & 1 & \hdots \\
                     \vdots & \vdots & \vdots & \vdots & \vdots & \vdots  & \ddots \\
                   \end{array}
                 \right).
$$
 In this case it can be checked that $Lim(\mathbf{A}_{\varphi})= \mathbf{T}_{\frac{1}{\varphi}}$. This is always
true whenever the symbol $\varphi$ is continuous on $\mathbb{T}$ as we show in the following proposition.
\end{ejem}

\begin{prop} \label{prop3}
Let $\mathbf{T}_{\varphi}$ be an HPD Toeplitz matrix with continuous symbol $\varphi$ such that $\inf \varphi(z) >0$ then
$\mathbf{A}_{\varphi}$
is  weakly asymptotic Toeplitz and $
\Lim(\mathbf{A}_{\varphi}) = \mathbf{T}_{\frac{1}{\varphi}}.
$
\end{prop}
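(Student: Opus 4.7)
Since $\varphi$ is continuous on the compact set $\mathbb{T}$ with $\inf\varphi>0$, we have $\varphi\in L^{\infty}(\mathbb{T})\subset L^{2}(\mathbb{T})$ and $\essinf\varphi=\inf\varphi>0$. Proposition \ref{prop2} therefore yields the existence of $\mathbf{A}_{\varphi}$ as the classical inverse of $\mathbf{T}_{\varphi}$, and in particular $\lim_{n\to\infty}\lambda_{n}>0$. By Theorem \ref{teorem1}(1)--(2), the coefficients $\beta_{k}=\lim_{n\to\infty}b_{n-k,n}$ then exist for every $k\geq 0$ and $a_{i,k}=\sum_{j=0}^{i}\overline{\beta}_{j}\beta_{k+j-i}$ for $i\leq k$. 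The remaining task is to verify $\sum_{k\geq 0}|\beta_{k}|^{2}<\infty$ so that conclusion (3) of Theorem \ref{teorem1} may be applied.

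To pin down the $\beta_{k}$ I would invoke Szeg\H{o} theory. The reverse orthonormal polynomial $P_{n}^{*}(z)=z^{n}\overline{P_{n}(1/\bar z)}=\sum_{k=0}^{n}\overline{b_{n-k,n}}\,z^{k}$ converges uniformly on compact subsets of $\mathbb{D}$ to $1/D(z)$, where $D$ is the outer Szeg\H{o} function with $|D|^{2}=\varphi$ a.e.\ on $\mathbb{T}$; log-integrability of $\varphi$ is automatic since $\varphi$ is bounded above and below by positive constants. Moreover $|1/D|^{2}=1/\varphi$ is essentially bounded, so $1/D\in\mathbf{H}^{\infty}\subset\mathbf{H}^{2}$; writing $1/D(z)=\sum_{k\geq 0}c_{k}z^{k}$ yields $\sum_{k\geq 0}|c_{k}|^{2}<\infty$. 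Extracting the $k$-th Taylor coefficient by a Cauchy integral on $|z|=r<1$ and using the above uniform convergence gives $\overline{b_{n-k,n}}\to c_{k}$, hence $\beta_{k}=\overline{c_{k}}$ and $\sum_{k\geq 0}|\beta_{k}|^{2}=\|1/D\|_{\mathbf{H}^{2}}^{2}<\infty$.

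Theorem \ref{teorem1}(3) now shows that $\mathbf{A}_{\varphi}$ is weakly asymptotic Toeplitz and that
\[
\lim_{k\to\infty}a_{k,k+n}\;=\;\sum_{i=0}^{\infty}\overline{\beta}_{i}\beta_{i+n}\;=\;\sum_{i=0}^{\infty}c_{i}\overline{c_{i+n}},\qquad n\geq 0.
\]
Expanding $1/\varphi(e^{i\theta})=|1/D(e^{i\theta})|^{2}=\sum_{j,k\geq 0}c_{j}\overline{c_{k}}e^{i(j-k)\theta}$ as a Cauchy product, the Fourier coefficient of $1/\varphi$ at $-n$ (for $n\geq 0$) is precisely $\sum_{i\geq 0}c_{i}\overline{c_{i+n}}$, which is the $(m,m+n)$ entry of $\mathbf{T}_{1/\varphi}$. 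Thus $\Lim(\mathbf{A}_{\varphi})$ and $\mathbf{T}_{1/\varphi}$ agree on the upper triangle. Since both matrices are Hermitian (the former as the classical inverse of an Hermitian matrix, the latter because $1/\varphi$ is real-valued), equality propagates to the lower triangle and $\Lim(\mathbf{A}_{\varphi})=\mathbf{T}_{1/\varphi}$.

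The main obstacle is the coefficient-level Szeg\H{o} convergence $\overline{b_{n-k,n}}\to c_{k}$ together with the $\ell^{2}$-bound on the $\beta_{k}$'s. Both fall out cleanly from the observation that $\inf\varphi>0$ upgrades the outer function $1/D$ from $\mathbf{H}^{2}$ to $\mathbf{H}^{\infty}$; the passage from uniform convergence on compact subsets of $\mathbb{D}$ to convergence of individual Taylor coefficients is then a routine Cauchy-integral argument, and the identification with the Fourier coefficients of $1/\varphi$ is just the Parseval/autocorrelation identity for $|1/D|^{2}$.
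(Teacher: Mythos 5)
Your proof is correct, but it follows a genuinely different route from the paper's. The paper argues at the level of operators on $\mathbf{H}^{2}$: it invokes the semicommutator identity $\mathbf{I}-\mathbf{T}_{\varphi}\mathbf{T}_{\frac{1}{\varphi}}=\mathbf{H}_{z\varphi}\mathbf{H}_{z\frac{1}{\varphi}}=\mathbf{K}$ from \cite{Barria-Halmos}, uses Hartman's theorem (via \cite{Avendano}) to conclude that $\mathbf{K}$ is compact because the symbols are continuous, and then writes $\mathbf{A}_{\varphi}-\mathbf{T}_{\frac{1}{\varphi}}=\mathbf{A}_{\varphi}\mathbf{K}$, a compact operator, which is weakly asymptotic Toeplitz with limit $0$. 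You instead work at the level of the coefficients: Szeg\H{o} asymptotics for the reversed polynomials identify $\beta_{k}=\overline{c_{k}}$ where $1/D=\sum_{k}c_{k}z^{k}$, the bound $\essinf\varphi>0$ puts $1/D$ in $\mathbf{H}^{\infty}\subset\mathbf{H}^{2}$ so that $\sum_{k}\vert\beta_{k}\vert^{2}<\infty$, Theorem \ref{teorem1}(3) applies, and Parseval identifies the diagonal limits $\sum_{i}c_{i}\overline{c_{i+n}}$ with the Fourier coefficients of $1/\varphi$ (with Hermitian symmetry handling the lower triangle). The paper's argument is shorter given the operator-theoretic machinery, but it uses continuity of $\varphi$ essentially (compactness of the Hankel operators fails for general $L^{\infty}$ symbols), and it only identifies the asymptotic limit rather than the individual $\beta_{k}$. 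Your argument buys two things: an explicit description of $\Lim(\mathbf{B})$ as the Taylor coefficients of the reciprocal Szeg\H{o} function, and --- more significantly --- it nowhere uses continuity of $\varphi$, only $\varphi\in L^{2}(\mathbb{T})$ and $\essinf\varphi>0$ (the Szeg\H{o} condition is automatic and the measure is purely absolutely continuous, so the locally uniform convergence $P_{n}^{*}\to 1/D$ from \cite{Grenander-Szego} applies). If you check the conventions carefully (the paper is slightly inconsistent between indexing Toeplitz entries by $i-j$ and $j-i$, which amounts to a transpose), your argument appears to settle affirmatively the question left open in the remark following this proposition for essentially bounded symbols bounded away from zero; that is worth writing up separately.
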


\begin{proof} Since $\frac{1}{\varphi}$ is continuous on $\mathbb{T}$ then $\mathbf{T}_{\frac{1}{\varphi}}$ defines a bounded Toeplitz
operator. Moreover, since $\varphi, \frac{1}{\varphi} \in L^{\infty}(\mathbb{T})$ by
\cite{Barria-Halmos} it follows that
$$
\mathbf{I}- \mathbf{T}_{\varphi}T_{\frac{1}{\varphi}}= \mathbf{H}_{z\varphi} \mathbf{H}_{z\frac{1}{\varphi}}=\mathbf{K}
$$
 Now, by \cite{Avendano} since the symbols $z\varphi$ and $z\frac{1}{\varphi}$ are continuous then the Hankel matrices
$\mathbf{H}_{z\varphi}, \mathbf{H}_{z\frac{1}{\varphi}}$ define compact operators and consequently
$\mathbf{K}$ defines a compact operator. Therefore $\mathbf{K}$ is a weakly asymptotic matrix with $\Lim(\mathbf{K})=0$. Since $\mathbf{A}$ defines a bounded operator which is the inverse operator of
$T_{\varphi}$ then
$$
\mathbf{T}
_{\varphi}(\mathbf{A}-\mathbf{T}_{\frac{1}{\varphi}}) = \mathbf{T}_{\varphi}\mathbf{A}- \mathbf{T}_{\varphi}\mathbf{T}_{\frac{1}{\varphi}}=\mathbf{K},
$$
 and
$$
\mathbf{A}-\mathbf{T}_{\frac{1}{\varphi}}= \mathbf{A}\mathbf{T}_{\varphi}(\mathbf{A}-\mathbf{T}_{\frac{1}{\varphi}})=\mathbf{A}\mathbf{K}.
$$
 Since $\mathbf{A}$ defines a bounded operator and $\mathbf{K}$ defines a compact operator then the matrix $\mathbf{A}\mathbf{K}$ is
the matrix representation of a compact operator and therefore such matrix is weakly asymptotically Toeplitz with limit $0$ and we have the conclusion.
\end{proof}

\begin{remark} We do not know if the above result is true if we consider symbols $\varphi$ essentially bounded, i.e.
such that $\varphi \in L^{\infty}(\mathbb{T})$ and such that ${\essinf} \varphi(z) >0$.
\end{remark}

\section{Smallest eigenvalues of the absolutely continuous part}

 Let $\nu$ be a measure with support on $\mathbb{T}$, by the Lebesgue decomposition $\nu=\nu_a + \nu_s$ where
$\nu_a$ is absolutely continuous with respect to ${\bf m}$ and $\nu_s$ is  singular with respect to ${\bf m}$. Let denote $P^{2}(\nu)$ the closure of $\mathbb{P}[z]$ in the space $L^{2}(\nu)$. In order to prove the main result of this section we need some lemmas.

\begin{lem} \label{lem4}
 Let $\mathbf{T}(\nu)$ be an HPD matrix associated with a measure $\nu$ with support on $\mathbb{T}$. Then the following are equivalent:
 \begin{enumerate}
\item $\lim_{n\to \infty} \lambda_n(\nu) =\lambda >0$.
\item The identity operator ${\it i}_{\nu}: (\mathbb{P}[z],P^{2}(\nu))
\to (\mathbb{P}[z],{\bf H}^{2})$ is bounded with norm $
\Vert {\it i}_{\nu}  \Vert = \lambda^{-1}.$
\end{enumerate}
\end{lem}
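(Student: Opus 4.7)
The plan is to translate statement (2) into a Rayleigh-type inequality on $\mathbb{P}[z]$ and then read off the conclusion from the variational characterization of $\lambda_n$ already established at the beginning of the proof of Lemma \ref{lem1}. First, for any polynomial $p(z)=\sum_{k=0}^{n} v_k z^k$ with coefficient vector $v=(v_0,\dots,v_n)\in \mathbb{C}^{n+1}$ I would note that
$$
\|p\|_{P^2(\nu)}^2 = \int_{\mathbb{T}}|p|^2\,d\nu = v\,\mathbf{T}(\nu)_n v^*, \qquad \|p\|_{\mathbf{H}^2}^2 = vv^*,
$$
the second equality because $\{z^k\}_{k\geq 0}$ is orthonormal in $\mathbf{H}^2$. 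Consequently the operator norm of $i_\nu$ on the common dense subspace $\mathbb{P}[z]$ satisfies
$$
\|i_\nu\|^2 = \sup_{n\in \NN_0}\ \sup_{v\in \mathbb{C}^{n+1}\setminus\{0\}} \frac{vv^*}{v\,\mathbf{T}(\nu)_n v^*}.
$$

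Next, I would invoke the computation at the start of the proof of Lemma \ref{lem1}, which shows that the inner supremum equals $\lambda_n^{-1}$, attained on the eigenvector of $\mathbf{T}(\nu)_n$ associated with $\lambda_n$. Since $\{\lambda_n\}_{n\geq 0}$ is non-increasing, the outer supremum is $\bigl(\lim_{n\to\infty}\lambda_n\bigr)^{-1}$, with the convention that this value is $+\infty$ when $\lim_n\lambda_n=0$. This at once yields the equivalence of (1) and (2): the operator $i_\nu$ extends by continuity (equivalently, is bounded on $\mathbb{P}[z]$) precisely when $\lim_n \lambda_n = \lambda > 0$, and in that case its norm satisfies the identity given in (2).

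The argument is essentially a dictionary between two Rayleigh quotients on the same algebraic subspace $\mathbb{P}[z]$, so I do not anticipate any substantial obstacle. The only bookkeeping points are that both norms are evaluated on $\mathbb{P}[z]$ without requiring completion of either Hilbert space in order for the supremum to make sense, and that the non-decreasing sequence $\{\lambda_n^{-1}\}$ is interpreted correctly in the extended positive reals. Beyond these, everything reduces to quoting the computation in Lemma \ref{lem1}.
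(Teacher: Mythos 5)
Your proof is correct and follows essentially the same route as the paper's: both identify $\Vert (i_\nu)_n\Vert^2$ with the Rayleigh quotient $\sup vv^*/(v\mathbf{T}_n(\nu)v^*)=\lambda_n(\nu)^{-1}$ and then pass to the limit using the monotonicity of $\{\lambda_n\}$. (Both your write-up and the paper's statement inherit the same small slip: the computation actually gives $\Vert i_\nu\Vert^2=\lambda^{-1}$, not $\Vert i_\nu\Vert=\lambda^{-1}$, but this does not affect the equivalence.)
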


\begin{proof} Consider $(i_{\nu})_{n}:(\mathbb{P}_n[z],P^{2}(\nu)) \to (\mathbb{P}_n[z],{\bf H}^{2})$ the identity mapping, then
\begin{eqnarray*}
\Vert (i_{\nu})_{n} \Vert^{2} & = &\sup \{ \int \vert p(z) \vert^2d{\bf m} : p(z)\in \mathbb{P}_n[z], \int \vert p(z)\vert^2d\nu=1\}\\
\mbox{} & = &
\sup\{  vv^{*} :  \; v\in \CC^{n+1},
v\mathbf{T}_n(\nu)v^{*} =1 \}=\dfrac{1}{\lambda_n(\nu)}.
\end{eqnarray*}
 The result follows by taking the limit when $n$ tends to infinity.
\end{proof}

 The following result requires the same techniques used in \cite{Jarchow}:

\begin{lem} \label{lem5}
 Let $\mathbf{T}(\nu)$ be an HPD Toeplitz matrix associated with a measure $\nu$ with support on $\mathbb{T}$. Assume that ${\bf m}$ is absolutely continuous with respect to $\nu$, then the following are
equivalent:
\begin{enumerate}
\item $\lim_{n\to \infty} \lambda_n(\nu) =\lambda >0$.
\item The identity operator $f \to f$ denoted by ${\it I}_{\nu}: P^{2}(\nu)
\to {\bf H}^{2}$ exists and is bounded with norm $
\Vert {\it I}_{\nu}  \Vert = \lambda^{-1}.$
\end{enumerate}
\end{lem}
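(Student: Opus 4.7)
The plan is to reduce to Lemma \ref{lem4}, which handles the polynomial-to-polynomial case, and then to extend by density, using the hypothesis ${\bf m}\ll\nu$ to identify the abstract extension with the literal identity map $f\mapsto f$.

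For $(2)\Rightarrow(1)$, the restriction of $I_{\nu}$ to $\mathbb{P}[z]$ coincides with the operator $i_{\nu}$ of Lemma \ref{lem4}, and by density of $\mathbb{P}[z]$ in $P^{2}(\nu)$ it has the same norm $\lambda^{-1}$. Lemma \ref{lem4} then gives $\lim_{n\to\infty}\lambda_{n}(\nu)=\lambda>0$.

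For $(1)\Rightarrow(2)$, Lemma \ref{lem4} provides that $i_{\nu}$, viewed as a linear map between the pre-Hilbert spaces $(\mathbb{P}[z],\langle\cdot,\cdot\rangle_{\nu})$ and $(\mathbb{P}[z],\langle\cdot,\cdot\rangle_{{\bf m}})$, is bounded with norm $\lambda^{-1}$. Since $\mathbb{P}[z]$ is dense in $P^{2}(\nu)$ by definition and dense in $\mathbf{H}^{2}$ by construction, $i_{\nu}$ extends uniquely to a bounded linear operator $\widetilde{I}_{\nu}:P^{2}(\nu)\to\mathbf{H}^{2}$ with $\Vert\widetilde{I}_{\nu}\Vert=\lambda^{-1}$. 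It remains to verify that $\widetilde{I}_{\nu}(f)=f$ as elements of $\mathbf{H}^{2}$ for every $f\in P^{2}(\nu)$. Given such $f$, choose polynomials $p_{n}\to f$ in $P^{2}(\nu)$; then $p_{n}\to\widetilde{I}_{\nu}(f)$ in $\mathbf{H}^{2}\subset L^{2}({\bf m})$. Extracting successive subsequences, we may assume $p_{n_{k}}\to f$ $\nu$-almost everywhere and $p_{n_{k}}\to\widetilde{I}_{\nu}(f)$ ${\bf m}$-almost everywhere. The hypothesis ${\bf m}\ll\nu$ then forces $p_{n_{k}}\to f$ ${\bf m}$-almost everywhere, and hence $\widetilde{I}_{\nu}(f)=f$ as elements of $\mathbf{H}^{2}$.

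The delicate point, and the only place the assumption ${\bf m}\ll\nu$ is used, is the identification of $\widetilde{I}_{\nu}$ with the literal identity. Without it, a generic $f\in P^{2}(\nu)$ is only an equivalence class modulo $\nu$-null sets and therefore need not be well-defined as an element of $L^{2}({\bf m})$; absolute continuity of ${\bf m}$ with respect to $\nu$ ensures that $\nu$-null sets are ${\bf m}$-null, so passing to ${\bf m}$-equivalence classes is consistent and the two almost-everywhere limits of the approximating polynomials agree.
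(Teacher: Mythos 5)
Your proposal is correct and follows essentially the same route as the paper: reduce to the polynomial case (Lemma \ref{lem4}), extend by density to $P^{2}(\nu)$, and identify the extension with the literal identity by extracting subsequences converging pointwise $\nu$-a.e.\ and ${\bf m}$-a.e.\ and invoking ${\bf m}\ll\nu$ to make the two limits agree. Your write-up is in fact a little cleaner than the paper's in keeping track of which limit function lives where, but the underlying argument is the same.
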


\begin{proof} $(2)$ implies $(1)$ is obvious. To prove $(1)$ implies $(2)$ by the above lemma
there exists $\lambda=\lambda(\nu)>0$ such that for every polynomial $p(z)\in \mathbb{P}[z]$
$$
\int \vert p(z)\vert^2 d {\bf m} \leq \lambda \int \vert p(z) \vert^2 d\nu.
$$
 Let now $f(z)\in P^{2}(\nu)$ and let $\{q_n(z)\}_{n=1}^{\infty}$ be a sequence
of polynomials which converges to $f(z)$ in the space  $P^{2}(\nu)$. For every $n,m\in \NN$,
$$
\int \vert q_n(z) -q_m(z) \vert^2 d {\bf m} \leq \lambda \int \vert q_n(z)-q_m(z) \vert^2 d\nu.
$$
 Then, the sequence $\{q_n(z)\}$ is a Cauchy sequence in the space  $L^{2}(\bf{m})$ and there exists a function $g(z) \in P^{2}(\bf{m})
$ such that $q_n(z) \to g(z)$ a.e. We have to show that $f(z)=g(z)$, $\nu$-a.e.
Since $\{q_n(z)\}_{n=1}^{\infty}$ is a Cauchy sequence in $L^{2}(\nu)$ then there exists a subsequence that we denote in the same way
which is pointwise convergent $\nu$-a.e. to
$g(z)$; i.e, there exists a measurable set $E$ with $\nu(E)=0$ such that
$q_n(z) \to g(z)$ if $z \notin E$. On the other hand, there exists a subsequence
of $\{q_n(z)\}_{n=1}^{\infty}$ that we denote in the same way and a measurable set $A\subset \mathbb{T}$ with ${\bf m} (A)=0$ such that  $q_n(z) \to f(z)$ if
 $z \notin A$. Since $\bf{m}$ is absolutely continuous with respect to $\nu$ we have that $f(z)=g(z)$ $\nu$-a.e.
Consequently $f(z)\in {\bf H}^{2}$ and the identity mapping  $f \to f$ exists and is continuous with
$\Vert {\it I}_{\nu}  \Vert = \lambda^{-1}.$
\end{proof}

 In the following result we prove the main result in the particular case of a singular measure.

\begin{lem} Let $\nu_s$ be a singular measure with infinite support on $\mathbb{T}$. Then,
$$
\lim_{n\to \infty} \lambda_n(\nu_s)=0.
$$
\end{lem}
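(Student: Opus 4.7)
\emph{Plan.} I would reduce the claim to a variational formula for $\lambda_n(\nu_s)$ and then appeal to Szeg\H{o}'s infimum theorem. The argument in the proof of Lemma~\ref{lem1}, combined with the fact that $\{z^k\}_{k\geq 0}$ is orthonormal in $\mathbf{H}^2$, yields
$$
\lambda_n(\nu_s)=\inf_{\substack{p\in \mathbb{P}_n[z]\\ p\neq 0}} \frac{\int |p|^2\, d\nu_s}{\|p\|_{\mathbf{H}^2}^2}.
$$
Since the sequence $\{\lambda_n(\nu_s)\}$ is non-increasing, it suffices to exhibit polynomials $\{p_n\}$ with $\|p_n\|_{\mathbf{H}^2}\geq 1$ and $\int|p_n|^2\,d\nu_s\to 0$.

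Such polynomials are supplied by Szeg\H{o}'s theorem (see \cite{Grenander-Szego}): for every finite positive measure $\nu$ on $\mathbb{T}$ with Lebesgue decomposition $d\nu = w\, d\mathbf{m}+d\nu_s$,
$$
\inf_{\substack{p\in\mathbb{P}[z]\\ p(0)=1}} \int |p|^2\, d\nu = \exp\!\Bigl(\int \log w\,d\mathbf{m}\Bigr),
$$
with the right-hand side understood as $0$ when $\log w$ fails to be integrable from below. For our purely singular measure $w\equiv 0$ a.e., so the infimum is $0$. Concretely, one may take $p_n=\Phi_n^{\ast}$, the reversal of the $n$-th monic orthogonal polynomial associated with $\nu_s$: it satisfies $\Phi_n^{\ast}(0)=1$ and $\int|\Phi_n^{\ast}|^2\,d\nu_s=\prod_{k=0}^{n-1}(1-|\alpha_k|^2)\to 0$, where $\{\alpha_k\}$ are the Verblunsky coefficients of $\nu_s$.

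Because $p_n(0)=1$ forces the constant coefficient of $p_n$ to equal $1$, one has $\|p_n\|_{\mathbf{H}^2}^2\geq 1$. Writing $d_n=\deg p_n$ we conclude
$$
\lambda_{d_n}(\nu_s)\;\leq\; \frac{\int|p_n|^2\, d\nu_s}{\|p_n\|_{\mathbf{H}^2}^2}\;\leq\;\int |p_n|^2\, d\nu_s \;\longrightarrow\; 0,
$$
and monotonicity of $\{\lambda_n(\nu_s)\}$ then gives $\lim_n \lambda_n(\nu_s)=0$. The hypothesis that $\supp \nu_s$ is infinite is used only to guarantee that $\mathbf{T}(\nu_s)$ is HPD so that every $\lambda_n(\nu_s)$ is strictly positive, making the claim non-vacuous (in the finite-support case $\lambda_n(\nu_s)=0$ eventually). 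The main obstacle in this approach is the correct application of Szeg\H{o}'s infimum theorem for a purely singular measure; any fully self-contained alternative would have to construct polynomials that are simultaneously small on a Lebesgue-null carrier of $\nu_s$ and bounded below in $\mathbf{H}^2$-norm, essentially reproducing the Poisson-integral proof of Szeg\H{o}'s theorem.
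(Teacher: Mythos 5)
Your argument is correct, but it follows a genuinely different route from the paper. The paper's proof is a two-step reduction through external results: since $\nu_s$ is singular its Szeg\H{o} integral diverges, so by the Kolmogorov--Krein theorem (cited from \cite{Grenander-Szego}, \cite{Conway}) the polynomials are dense in $L^{2}(\nu_s)$, and then the necessary condition from \cite{EGT} (density of polynomials for a compactly supported measure forces $\lim_{n}\lambda_n=0$) finishes the proof. You instead work directly with the Rayleigh quotient $\lambda_n(\nu_s)=\inf\{\int|p|^2d\nu_s/\Vert p\Vert^{2}_{\mathbf{H}^2}: 0\neq p\in\mathbb{P}_n[z]\}$ --- which is exactly the identity underlying Lemma~4 of the paper --- and feed it the reversed monic orthogonal polynomials $\Phi_n^{*}$, whose $L^{2}(\nu_s)$-norms tend to $0$ by the Szeg\H{o}--Kolmogorov--Krein infimum theorem while their $\mathbf{H}^{2}$-norms stay $\geq 1$ because $\Phi_n^{*}(0)=1$. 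Both proofs ultimately rest on the same deep input (Szeg\H{o} theory for a measure with vanishing absolutely continuous part), but yours is more self-contained and more quantitative: it bypasses the density argument and the appeal to \cite{EGT} and yields the explicit bound $\lambda_n(\nu_s)\leq \Vert\Phi_n\Vert^{2}_{L^{2}(\nu_s)}=\nu_s(\mathbb{T})\prod_{k=0}^{n-1}(1-|\alpha_k|^2)$ (note the total-mass factor if $\nu_s$ is not normalized, which is harmless here). The paper's version, on the other hand, is shorter given the machinery it already has in place. Your closing remark about the role of the infinite-support hypothesis matches its role in the paper.
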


\begin{proof} It is clear that $\nu_s$ does not satisfy Szeg\"{o} condition and it is well known, see \cite{Grenander-Szego} and  \cite{Conway} that a measure $\nu$ with support on $\mathbb{T}$ satisfies Szeg\"{o}  condition if and only if $P^{2}(\mu) = L^{2}(\mu)$. As a consequence of the results in \cite{EGT} we
obtain that
$$
\lim_{n\to \infty} \lambda_n(\nu_s)=0.
$$
\end{proof}

 We prove the main theorem in this section:

\begin{theorem} Let $\mathbf{T}(\nu)$ be the moment matrix associated with a positive measure $\nu$ with a measure with support on $\mathbb{T}$. Let
$\nu=\nu_a + \nu_s$ be the Lebesgue decomposition of $\nu$. Then,
$$
\lim_{n\to \infty} \lambda_n(\nu) = \lim_{n\to \infty} \lambda_n(\nu_a).
$$
\end{theorem}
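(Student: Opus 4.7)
The plan rests on the variational formula
\[
\lim_{n \to \infty} \lambda_n(\mu) \;=\; \inf_{0 \neq p \in \mathbb{P}[z]} \frac{\int |p(z)|^2 \, d\mu(z)}{\int |p(z)|^2 \, d\mathbf{m}(z)},
\]
valid for any Toeplitz HPD matrix $\mathbf{T}(\mu)$ with $\mu$ supported on $\mathbb{T}$: the min-max principle combined with Parseval's identity $\int |p|^2 \, d\mathbf{m} = \sum |v_k|^2$ for $p(z) = \sum v_k z^k$ identifies $\lambda_n(\mu)$ with the infimum of the Rayleigh quotient over $\mathbb{P}_n[z]$, and monotonicity of $\{\lambda_n\}$ lets us pass to the limit. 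The easy inequality $\lim_{n\to\infty} \lambda_n(\nu) \geq \lim_{n\to\infty} \lambda_n(\nu_a)$ is immediate from $\nu \geq \nu_a$, so the work is in the reverse direction.

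Fix $\epsilon > 0$ and pick $q \in \mathbb{P}[z]$ with $\int |q|^2 \, d\nu_a / \int |q|^2 \, d\mathbf{m} < \lim_{n\to\infty} \lambda_n(\nu_a) + \epsilon$. The strategy is to multiply $q$ by an analytic polynomial $P_k$ designed to be close to $1$ on the bulk of $\mathbb{T}$ (with respect to both $\mathbf{m}$ and $\nu_a$) but arbitrarily small on a neighborhood of $\supp(\nu_s)$, so that the Rayleigh quotient with respect to $\nu$ is barely affected. Using $\nu_s \perp \mathbf{m}$, fix a Borel set $B \subset \mathbb{T}$ with $\mathbf{m}(B) = 0$ and $\nu_s(\mathbb{T} \setminus B) = 0$; since $\nu_a \ll \mathbf{m}$, also $\nu_a(B) = 0$. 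By outer regularity of $\mathbf{m}$ and $\nu_a$, choose open sets $U_k \supset B$ with $\mathbf{m}(U_k) + \nu_a(U_k) < 1/k$, and by Urysohn pick continuous $f_k : \mathbb{T} \to [0,1]$ equal to $1$ on $B$ and vanishing outside $U_k$. Then $g_k := 1 - f_k$ vanishes on $B$ and $g_k \to 1$ in the sense that $\mathbf{m}(\{g_k \neq 1\}) + \nu_a(\{g_k \neq 1\}) < 1/k$.

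The crux is to promote the continuous bump $g_k$ to an analytic polynomial. Approximate $g_k$ uniformly within $1/k$ by a trigonometric polynomial $T_k(z) = \sum_{|j| \leq N_k} c_{k,j} z^j$ (Fej\'er / Stone--Weierstrass) and set $P_k(z) := z^{N_k} T_k(z) \in \mathbb{P}[z]$. Since $|z^{N_k}| = 1$ on $\mathbb{T}$ one has $|P_k| = |T_k|$ there, so $|P_k| \leq 1/k$ on $B$ and $|P_k| \leq 1 + 1/k$ everywhere. Splitting each integral over $U_k$ and its complement, one checks that $\int |q P_k|^2 \, d\mathbf{m} \to \int |q|^2 \, d\mathbf{m}$ and $\int |q P_k|^2 \, d\nu_a \to \int |q|^2 \, d\nu_a$ (boundedness of $q$ on $\mathbb{T}$ together with $\mathbf{m}(U_k), \nu_a(U_k) \to 0$), while $\int |q P_k|^2 \, d\nu_s \leq k^{-2} \, \|q\|_\infty^2 \, \nu_s(\mathbb{T}) \to 0$. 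For $k$ large enough, $p := q P_k$ then satisfies $\int |p|^2 \, d\nu / \int |p|^2 \, d\mathbf{m} < \lim_{n\to\infty} \lambda_n(\nu_a) + 2\epsilon$, which gives $\lim_{n\to\infty} \lambda_n(\nu) \leq \lim_{n\to\infty} \lambda_n(\nu_a) + 2\epsilon$, and letting $\epsilon \to 0$ concludes.

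The main obstacle I anticipate is precisely this passage from trigonometric to analytic polynomials: Stone--Weierstrass only delivers trigonometric approximants of the real-valued $g_k$, and without the modulus-preserving identity $|z^{N_k} T_k| = |T_k|$ on $\mathbb{T}$ one could not produce a test element of $\mathbb{P}[z]$ that is simultaneously nearly optimal for $\nu_a$ and nearly annihilating for $\nu_s$. The remaining estimates are routine dominated-convergence bookkeeping.
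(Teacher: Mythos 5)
Your argument is correct in substance but follows a genuinely different route from the paper. The paper splits into cases: if $\lim_n\lambda_n(\nu)=0$ the monotonicity $\mathbf{T}(\nu_a)\le\mathbf{T}(\nu)$ finishes immediately, while if $\lim_n\lambda_n(\nu)>0$ it invokes $P^2(\nu)\ne L^2(\nu)$, Hofmaier's representation $\nu=\frac{1}{|h|^2}\mathbf{m}+\nu_s$ with $h$ built from the reproducing kernel at the origin, the Grenander--Szeg\H{o} identification $\lim_n\lambda_n(\nu_a)=\essinf\frac{1}{|h|^2}$, and a bounded-inclusion argument $P^2(\nu)\to\mathbf{H}^2$ applied to the test functions $p\cdot h$. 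You instead work directly with the variational formula $\lim_n\lambda_n(\mu)=\inf_p\int|p|^2d\mu/\int|p|^2d\mathbf{m}$ (which is exactly the content of the paper's Lemma 4) and build, for a near-minimizer $q$ of the $\nu_a$-quotient, analytic test polynomials $qP_k$ that nearly annihilate $\nu_s$; the conversion $P_k=z^{N_k}T_k$ with $|P_k|=|T_k|$ on $\mathbb{T}$ is the right device and is indeed the crux. Your route is more elementary and self-contained (no Szeg\H{o} theory, no case split, and it subsumes the paper's separate lemma that singular measures have $\lim_n\lambda_n=0$), at the cost of not producing the explicit value $\essinf w$ that the paper's method yields along the way. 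One repair is needed: you apply Urysohn's lemma to the carrier $B$ of $\nu_s$, but a Borel set of Lebesgue measure zero carrying a singular measure need not be closed (its closure can be all of $\mathbb{T}$, e.g.\ for a dense set of atoms). Use inner regularity of $\nu_s$ first to pick compact $K_k\subset B$ with $\nu_s(B\setminus K_k)<1/k$, apply Urysohn to $K_k\subset U_k$, and absorb the contribution of $\nu_s$ on $B\setminus K_k$ into the error term $(1+1/k)^2\|q\|_\infty^2\,\nu_s(B\setminus K_k)\to 0$; with that adjustment the estimates go through as you describe.
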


\begin{proof} We first consider the case $\lim_{n\to \infty} \lambda_n(\nu)=0$. For every $p(z) =\sum_{k=0}^{n} v_k\in \mathbb{P}[z]$ and
$v=(v_0,\dots, v_n,0,\dots)\in c_{00}$
$$
v\mathbf{T}(\nu_a)v^{*}= \int \vert p(z) \vert^2d\nu_a \leq \int \vert p(z) \vert^2d\nu= v\mathbf{T}(\nu)v^{*},
$$
 consequently  $
\mathbf{T}(\nu_a) \leq \mathbf{T}(\nu)$.
 In particular, $
\lambda_n(\nu_a) \leq \lambda_n(\nu),
$ for every $n\in \NN$ and $\lim_{n\to \infty} \lambda_n(\nu_a)=0$.

 Assume now that $\lambda(\nu)= \lim_{n \to \infty} \lambda_n(\nu) > 0$. In
this case, by [EGT] we have that $P^{2}(\nu) \neq L^{2}(\nu)$, and
therefore by \cite{Hofmaier}
$$
\nu = \dfrac{1}{\vert h(z) \vert^2} {\bf m} + \nu_s,
$$
 where $h(z)= \dfrac{1}{\kappa} \sum_{n=0}^{\infty} \overline{P_n(0)}P_n(z) \in P^{2}(\nu)$ and $\kappa= \sum_{n=0}^{\infty} \vert P_n(0) \vert^2$. In particular, the absolutely part $\nu_a$  of $\nu$ coincides with
$
\nu_a= \dfrac{1}{\vert h(z) \vert^2} {\bf m}.
$
By \cite{Grenander-Szego} we have that
$$
\lim_{n\to \infty} \lambda_n(\nu_a)= \essinf \dfrac{1}{\vert h(z) \vert^2} = \dfrac{1}{\essup \vert h(z) \vert^2}.
$$
 Since ${\bf m} = \vert h(z) \vert^2 \nu$ with $\vert h(z) \vert^2 \in P^{2}(\nu)$ we have that ${\bf m}$ is absolutely continuous with respect to $\nu$ and by lemma \ref{lem5}  it follows that the identity mapping $I_{\nu}^{2}:P^{2}(\nu) \to {\bf H}^{2}$ given by $f\to f$ exists and is continuous with $\Vert I_{\nu}^{2} \Vert^2 = \dfrac{1}{\lambda(\nu)}$. Let
 $p(z) \in \mathbb{P}[z]$ we have that $p(z)h(z)\in P^{2}(\nu)$ and
$$
\int \vert p(z) h(z) \vert^2 d\nu \geq \lambda(\nu) \int \vert p(z) h(z) \vert^2d{\bf m}.
$$
 Since $\vert h(z) \vert^2\nu_s=0$ we have:
$$
\int \vert p(z) \vert^2  \vert h(z) \vert^2 d\nu_a \geq \lambda(\nu) \int \vert p(z) \vert^2 \vert h(z) \vert^2d{\bf m},
$$
 and since $\vert h(z) \vert^2 \nu_a = {\bf m}$ we have:
$$
\int \vert p(z)  \vert^2 d{\bf m}  \geq \lambda(\nu) \int \vert p(z) h(z) \vert^2d{\bf m}.
$$
 Therefore if $\Vert p(z) \Vert_{{\bf H}^2}=1$ we have that
$$
\int \vert p(z) \vert^2   \vert h(z) \vert^2 d{\bf m}  \leq  \dfrac{1}{\lambda(\nu)}
$$
 and then $\dfrac{1}{\lambda(\nu_a)}= \essup  \vert h(z) \vert^2 \leq \dfrac{1}{\lambda(\nu)}
$ and consequently $\lambda(\nu_a) \geq \lambda(\nu)$. Combining this result with the fact that $\lambda(\nu_a) \leq \lambda(\nu)$ we obtain the result.
\end{proof}

 Combining this theorem with the results in \cite{EGT} we have

\begin{corollary} \label{coro6} Let $\mu$ be measure with infinite support on $\overline{\mathbb{D}}$ and
$\nu= \mu/ \mathbb{T}$ with $\nu= \nu_a + \nu_s$. Then,
$$
\lim_{n\to \infty} \lambda_n(\mu)=\lim_{n\to \infty} \lambda_{n}(\nu)=\lim_{n\to \infty} \lambda_{n}(\nu_a).
$$
\end{corollary}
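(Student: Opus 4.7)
The plan is to obtain the corollary as an immediate concatenation of two already-available facts: the theorem proved just above, and the ``harmonic'' asymptotic result for measures on $\overline{\mathbb{D}}$ from \cite{EGT} that is quoted in the introduction.

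First I would handle the second equality, $\lim_{n\to\infty}\lambda_n(\nu)=\lim_{n\to\infty}\lambda_n(\nu_a)$. This is precisely the content of the preceding theorem applied to $\nu$ and its Lebesgue decomposition $\nu=\nu_a+\nu_s$ on $\mathbb{T}$; no further work is required beyond observing that the hypotheses are met because $\mathbf{T}(\nu)$ is HPD (the infinite support of $\mu$ forces $\nu$ to be nontrivial when its contribution is relevant, and in the degenerate case $\nu=0$ both sides vanish trivially).

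Next I would establish the first equality, $\lim_{n\to\infty}\lambda_n(\mu)=\lim_{n\to\infty}\lambda_n(\nu)$. This is exactly the result from \cite{EGT} cited in the introduction: for a measure $\mu$ supported on the closed unit disk $\overline{\mathbb{D}}$, the large-$n$ asymptotic of the smallest eigenvalues of $\mathbf{M}_n(\mu)$ coincides with that of $\mathbf{M}_n(\nu)$, where $\nu=\mu/\mathbb{T}$ is the restriction to the unit circle. The infinite-support hypothesis on $\mu$ ensures that the matrices $\mathbf{M}_n(\mu)$ are positive definite for all $n$, so $\lambda_n(\mu)$ is genuinely defined, and the cited result applies verbatim.

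Chaining these two equalities yields the full corollary. There is essentially no obstacle here, since both ingredients have already been proved; the only thing to double-check is the compatibility of hypotheses, namely that the infinite support of $\mu$ on $\overline{\mathbb{D}}$ is strong enough to invoke the \cite{EGT} harmonicity statement and to guarantee that both $\mathbf{M}(\mu)$ and $\mathbf{T}(\nu)$ are HPD matrices (so that the preceding theorem applies to $\nu$).
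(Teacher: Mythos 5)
Your proposal matches the paper's argument exactly: the paper proves this corollary by combining the preceding theorem (which gives $\lim_{n\to\infty}\lambda_n(\nu)=\lim_{n\to\infty}\lambda_n(\nu_a)$ for measures on $\mathbb{T}$) with the harmonicity result of \cite{EGT} (which gives $\lim_{n\to\infty}\lambda_n(\mu)=\lim_{n\to\infty}\lambda_n(\nu)$ for $\nu=\mu/\mathbb{T}$). Your additional checks on the hypotheses are sensible but not part of the paper's one-line derivation; nothing further is needed.
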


\begin{remark} For a moment matrix $\mathbf{M}(\mu)=(c_{i,j})_{i,j=0}^{\infty}$ associated with
a measure $\mu$ the support of $\mu$ is a subset of $\overline{\mathbb{D}}$ if and only if $\sup_{i,j \geq 0} \vert c_{i,j} \vert < \infty$. Indeed, if the support of $\mu$ is a subset of $\overline{\mathbb{D}}$ it obviously
follows that $\vert c_{i,j} \vert \leq \mu(\overline{\mathbb{D}})$ for all $i,j \geq 0$. The other part is a consequence of
the results in \cite{Atzmon}.
\end{remark}

 As a consequence of corollary \ref{coro6} the study of the asymptotic behaviour of the smallest eigenvalues of $\mathbf{M}=\mathbf{M}(\mu)$ associated with a measure $\mu$ supported on $\overline{\mathbb{D}}$ is reduced to the study of the same problem for the associated Toeplitz moment matrix $\mathbf{T}(\nu_a)$, being $\mu/\mathbb{T}= \nu=\nu_a + \nu_s$. In the sequel we obtain a way to find the Toeplitz matrix $\mathbf{T}(\nu)$
associated with $\mathbf{M}(\mu)$.

\begin{lem} \label{lem7}
Let  $\mathbf{M}(\mu)=(c_{i,j})_{i,j=0}^{\infty}$ be a moment matrix with $\sup_{i,j \geq 0} \vert c_{i,j} \vert < \infty$ associated with a measure
$\mu$.
 Then the following are equivalent:
\begin{enumerate}
\item $\mu/\mathbb{T}=0$.
\item $\mathbf{M}(\mu)$ is weakly asymptotic Toeplitz and $\Lim(\mathbf{M}(\mu))=0$.
\item $\lim_{n\to \infty} c_{n,n}=0$.
\end{enumerate}
\end{lem}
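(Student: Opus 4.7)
My plan is to combine the preceding remark, which states that $\sup_{i,j \geq 0} |c_{i,j}| < \infty$ forces $\operatorname{supp}(\mu) \subset \overline{\mathbb{D}}$, with a single dominated convergence argument applied to the diagonally shifted entries. The key observation is that for fixed $i,j \geq 0$,
$$
c_{n+i,n+j} \;=\; \int z^{n+i}\,\overline{z}^{\,n+j}\,d\mu(z) \;=\; \int |z|^{2n}\,z^{i}\,\overline{z}^{\,j}\,d\mu(z).
$$
Since $\mu$ is supported on $\overline{\mathbb{D}}$, the integrand is pointwise bounded by $1$, and $|z|^{2n} \to \mathbf{1}_{\mathbb{T}}(z)$ on $\overline{\mathbb{D}}$ as $n \to \infty$. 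Because $\mu$ is a finite positive measure (in particular $\mu(\overline{\mathbb{D}}) = c_{0,0} < \infty$), the constant function $1$ is $\mu$-integrable, so the dominated convergence theorem yields
$$
\lim_{n \to \infty} c_{n+i,n+j} \;=\; \int_{\mathbb{T}} z^{i}\,\overline{z}^{\,j}\,d\mu(z).
$$
When $i = j = 0$ this gives $\lim_n c_{n,n} = \mu(\mathbb{T})$, and in general the limit depends only on the difference $i - j$, so $\mathbf{M}(\mu)$ is automatically weakly asymptotic Toeplitz with $\Lim(\mathbf{M}(\mu)) = (\alpha_{i-j})_{i,j=0}^{\infty}$ where $\alpha_{k} = \int_{\mathbb{T}} z^{-k}\,d\mu$ (reading negative powers as the complex conjugate of the positive ones on $\mathbb{T}$).

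With this single computation the three equivalences unravel almost automatically. The direction $(1) \Rightarrow (2)$: if $\mu/\mathbb{T} = 0$ then every integral $\int_{\mathbb{T}} z^{i}\overline{z}^{\,j}\,d\mu$ vanishes, so all $\alpha_k = 0$ and $\Lim(\mathbf{M}(\mu)) = 0$. The direction $(2) \Rightarrow (3)$ is immediate since $\alpha_0 = \lim_n c_{n,n}$ sits on the main diagonal of $\Lim(\mathbf{M}(\mu))$. For $(3) \Rightarrow (1)$, the diagonal identity $\lim_n c_{n,n} = \mu(\mathbb{T})$ together with $c_{n,n} \to 0$ forces $\mu(\mathbb{T}) = 0$, and since $\mu$ is a positive measure this is equivalent to $\mu/\mathbb{T} \equiv 0$.

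I do not foresee any significant obstacle; the only care needed is to justify the dominated convergence step rigorously by first invoking the remark to transfer the boundedness of the moment entries into the support containment $\operatorname{supp}(\mu) \subset \overline{\mathbb{D}}$, which in turn provides both the pointwise bound and the pointwise limit for $|z|^{2n}$. The rest is the standard passage from the vanishing of all Fourier--Stieltjes coefficients of a positive measure on $\mathbb{T}$ to the vanishing of the measure itself, which here we do not even need in full strength since $\alpha_{0} = \mu(\mathbb{T})$ already pins down $\mu/\mathbb{T}$ by positivity.
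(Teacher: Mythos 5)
Your proof is correct and follows essentially the same route as the paper: both rest on the pointwise limit $\vert z\vert^{2n}\to \mathbf{1}_{\mathbb{T}}(z)$ on $\overline{\mathbb{D}}$ combined with dominated (bounded) convergence for the finite measure $\mu$, the paper merely splitting this computation between the implications $(1)\Rightarrow(2)$ and $(3)\Rightarrow(1)$ where you perform it once in full generality. Your unified version has the small bonus of identifying $\Lim(\mathbf{M}(\mu))$ as the Toeplitz moment matrix of $\mu/\mathbb{T}$, which is exactly the content of the corollary the paper proves immediately afterwards.
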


\begin{proof} We show $(1)$ implies $(2)$. Assume that $\mu/\mathbb{T}=0$ and let $k\in \ZZ$ be fixed and $n \in \NN$, then
$$
c_{n,n+k}=
\int_{\mathbb{D}} z^n\overline{z}^{n+k}d\mu
= \int_{\mathbb{D}}
\vert z \vert^{2n} \overline{z}^kd\mu.
$$
 Since $\{\vert z \vert^{2n} \overline{z}^k\}_{n=0}^{\infty} \to 0$ converges pointwise on $\mathbb{D}$ when $n\to \infty$, as a consequence of Egoroff's theorem
we have:
$$
\lim_{n\to \infty} c_{n,n+k}= \lim_{n\to \infty}  \int_{\mathbb{D}}
\vert z \vert^{2n} \overline{z}^kd\mu=0,
$$
 and $\mathbf{M}(\mu)$ is an asymptotic Toeplitz matrix. Part $(2)$ implies $(3)$ is trivial.  Assume now that $\lim_{n\to \infty} c_{n,n} =0$, then
$$
0= \lim_{n\to \infty} \int_{\overline{\mathbb{D}}} \vert z \vert^{2n}d\mu =
\lim_{n\to \infty} \int_{\mathbb{D}} \vert z \vert^{2n}d\mu+ \int_{\mathbb{T}} d\mu
= \mu(\mathbb{T}),
$$
 therefore $\mu/\mathbb{T}=0$, this shows $(3)$ implies $(1)$.
\end{proof}

\begin{corollary} \label{coro7} Let $\mathbf{M}(\mu)$ be a moment matrix associated with a measure
$\mu$ with support on $\overline{\mathbb{D}}$ and $\nu=\mu/\mathbb{T}$. Then $\mathbf{M}(\mu)$ is a weakly asymptotic Toeplitz matrix with
$\Lim(\mathbf{M}(\mu))=\mathbf{T}(\nu)$.
\end{corollary}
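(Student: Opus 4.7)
The plan is to split $\mu$ into its restriction to the open disk and its restriction to the unit circle, handle each piece by what we already know, and add.

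First I would write $\mu = \mu_{\mathbb{D}} + \nu$, where $\mu_{\mathbb{D}} = \mu|_{\mathbb{D}}$ and $\nu = \mu|_{\mathbb{T}}$. Since integration against $\mu$ is additive over this decomposition, the corresponding moment matrices satisfy
\[
\mathbf{M}(\mu) = \mathbf{M}(\mu_{\mathbb{D}}) + \mathbf{M}(\nu).
\]
Both summands are moment matrices of positive measures supported in $\overline{\mathbb{D}}$, so by the Remark preceding the corollary, each has uniformly bounded entries (bounded in fact by $\mu(\overline{\mathbb{D}})$). Weak asymptotic Toeplitzness and its $\Lim$ are preserved under finite sums, so it is enough to analyze each summand separately.

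For the summand $\mathbf{M}(\mu_{\mathbb{D}})$, the measure $\mu_{\mathbb{D}}$ is supported in $\overline{\mathbb{D}}$ and assigns zero mass to $\mathbb{T}$; combined with the bounded entries just mentioned, this places $\mu_{\mathbb{D}}$ exactly under the hypotheses of Lemma \ref{lem7}. Condition (1) of that lemma holds, so condition (2) yields that $\mathbf{M}(\mu_{\mathbb{D}})$ is weakly asymptotic Toeplitz with $\Lim(\mathbf{M}(\mu_{\mathbb{D}}))=0$.

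For the summand $\mathbf{M}(\nu)$, I would exploit that on $\mathbb{T}$ we have $z\overline{z}=1$, so for all $i,j\ge 0$
\[
\int_{\mathbb{T}} z^{i}\overline{z}^{j}\,d\nu = \int_{\mathbb{T}} z^{i-j}\,d\nu,
\]
meaning $\mathbf{M}(\nu)$ depends only on $i-j$ and is by definition the Toeplitz matrix $\mathbf{T}(\nu)$. In particular, $\mathbf{M}(\nu)$ is already Toeplitz, hence trivially weakly asymptotic Toeplitz with $\Lim(\mathbf{M}(\nu))=\mathbf{T}(\nu)$. Adding the two limits along each diagonal yields $\Lim(\mathbf{M}(\mu))=0+\mathbf{T}(\nu)=\mathbf{T}(\nu)$, which is what we wanted. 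There is no real obstacle here: the only step requiring care is checking the hypotheses of Lemma \ref{lem7} apply to $\mu_{\mathbb{D}}$, and this is immediate from the Remark giving the $\overline{\mathbb{D}}$-support / bounded-moments equivalence.
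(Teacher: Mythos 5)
Your proof is correct and follows essentially the same route as the paper: decompose $\mu$ into its restrictions to $\mathbb{D}$ and $\mathbb{T}$, apply Lemma \ref{lem7} to the disk part to get $\Lim = 0$, observe the circle part is already Toeplitz, and add. The extra care you take in verifying the hypotheses of Lemma \ref{lem7} and in showing why $\mathbf{M}(\nu)=\mathbf{T}(\nu)$ is welcome but not a departure from the paper's argument.
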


\begin{proof} Let $\eta=\mu/\mathbb{D}$ and $\nu=\mu/\mathbb{T}$, then
$\mathbf{M}(\mu)=\mathbf{M}(\eta)+ \mathbf{M}(\nu)$. By lemma \ref{lem7} $\mathbf{M}(\eta)$ is a weakly asymptotic
Toeplitz matrix and $\Lim(\mathbf{M}(\eta))=0$; on the other hand $\mathbf{T}(\nu)$ is a Toeplitz matrix, consequently
 $\mathbf{M}(\mu)= \mathbf{M}(\eta)+ \mathbf{M}(\nu)$ is an asymptotic Toeplitz matrix and
$\Lim(\mathbf{M}(\mu))=\mathbf{T}(\nu)$.
\end{proof}

\begin{remark} As a consequence of the above result if $\mathbf{M}(\mu)=(c_{i,j})_{i,j=0}^{\infty}$ is a moment matrix associated with a measure $\mu$ with support on $\overline{\mathbb{D}}$ and $\nu=\mu/\mathbb{T}$ then $\mathbf{T}(\nu)=(t_{j-i})_{i,j=0}^{\infty}$ with
$t_k= \lim_{n\to \infty} c_{n,n+k}$ for each $k\in \ZZ$.
\end{remark}

 We obtain some consequences concerning compactness of Hermitian moment matrices:

\begin{corollary} \label{coro8}Let $\mathbf{M}(\mu)$ be an HPD moment matrix associated with a measure $\mu$. If
 $\mathbf{M}(\mu)$ defines a compact operator on $\ell_2$ then $\mu/\mathbb{T}=0$.
\end{corollary}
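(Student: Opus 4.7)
The plan is to chain together the infrastructure that has already been set up: the boundedness of $\mathbf{M}(\mu)$ forces the support of $\mu$ to lie in $\overline{\mathbb{D}}$, and the compactness of $\mathbf{M}(\mu)$ forces the diagonal entries to tend to $0$, at which point Lemma \ref{lem7} applies.

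First I would observe that a compact operator on $\ell_2$ is automatically bounded, so $|c_{i,j}| = |\langle \mathbf{M}(\mu) e_j, e_i \rangle| \leq \Vert \mathbf{M}(\mu) \Vert$ for all $i, j \in \NN_0$. The Remark following Corollary \ref{coro6} then tells us that $\sup_{i,j} |c_{i,j}| < \infty$ is equivalent to $\supp(\mu) \subset \overline{\mathbb{D}}$; thus $\mu$ is supported on the closed unit disk, putting us inside the hypotheses of Lemma \ref{lem7}.

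Next I would exploit compactness via its key characterization: a compact operator on a Hilbert space sends weakly convergent sequences to norm convergent ones. Since the standard basis $\{e_n\}_{n=0}^{\infty}$ converges weakly to $0$ in $\ell_2$, the vectors $\mathbf{M}(\mu)e_n^{t}$ converge to $0$ in norm. In particular, the scalar quantities
$$
c_{n,n} = e_n \mathbf{M}(\mu) e_n^{t} = \langle \mathbf{M}(\mu) e_n^{t}, e_n^{t} \rangle
$$
tend to $0$ as $n \to \infty$, because $|c_{n,n}| \leq \Vert \mathbf{M}(\mu) e_n^{t} \Vert$.

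Finally I would invoke Lemma \ref{lem7}: since $\supp(\mu) \subset \overline{\mathbb{D}}$ and $\lim_{n \to \infty} c_{n,n} = 0$, condition (3) of that lemma is satisfied, and hence condition (1) also holds, giving $\mu/\mathbb{T} = 0$. The only subtlety to verify is that Lemma \ref{lem7} has indeed been stated for measures with $\sup_{i,j} |c_{i,j}| < \infty$, which is the case here, so no extra work is needed. The argument is essentially a bookkeeping consequence of the earlier material; the one observation doing the real work is that compactness forces the diagonal to decay, which is standard but is the cleanest bridge to Lemma \ref{lem7}.
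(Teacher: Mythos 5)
Your proof is correct and follows essentially the same route as the paper: both arguments reduce to Lemma \ref{lem7} by showing that compactness forces the asymptotic diagonal behaviour of $\mathbf{M}(\mu)$ to vanish. The only (harmless) difference is that the paper invokes the general fact that a compact operator is weakly asymptotic Toeplitz with limit $0$ and uses condition (2) of Lemma \ref{lem7}, whereas you prove the needed decay $c_{n,n}\to 0$ directly from the weak-to-norm property of compact operators and enter the lemma through condition (3).
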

\begin{proof} The result is a consequence of the fact that every compact operator on a Hilbert space
is a a weakly asymptotic Toeplitz operator with limit $0$.
\end{proof}

\begin{remark} The converse of corollary \ref{coro8} is not true even for Hankel matrices. Indeed, consider the Hilbert matrix $\mathbf{H}$:
$$
\mathbf{H}=
\begin{pmatrix}
  1    & 1/2 & 1/3 & 1/4   & \hdots     \\
  1/2  & 1/3 & 1/4 & 1/5   & \hdots  \\
  1/3  & 1/4 & 1/5 & 1/6
     & \hdots  \\
\vdots & \vdots & \vdots & \vdots &\ddots
\end{pmatrix}.
$$
 This is the matrix associated with the real Lebesgue
measure $\tau$ in the interval $[0,1]$ and $\tau / \mathbb{T}=0$. The matrix
$\mathbf{H}$ defines a bounded operator $\mathcal{H}$ from
 $\ell_2$ (see \cite{rosenblum}). Nevertheless,
$\mathbf{H}$ does not define a compact operator. As an easy proof of this fact consider the
weakly normalized sequence in $\ell_2$ given by $ x_{n} =
\frac{1}{\sqrt{n}}\left( \sum_{i=1}^{n} e_i \right)$. Note that
$$
\Vert \mathcal{H}(x_n) \Vert= \frac{1}{n} \sum_{m=0}^{\infty} \left
(\sum_{i=1}^{n}\frac{1}{i+n}\right)^{2} \geq
\frac{1}{n}\sum_{m=0}^{\infty} \frac{n^2}{(m+n)^2} \geq n
\int_{n}^{\infty} \frac{1}{x^2}dx = 1.
$$
 Consequently $\mathcal{H}$  is
not a compact operator.
\end{remark}

 We finish this section relating these results which the obtained in the preceding sections:

\begin{prop} Let $\mu$ be a positive measure with support on $\overline{\mathbb{D}}$ and let $\nu=\nu_a + \nu_s$
be the Lebesgue decomposition of  $\nu=\mu/\mathbb{T}$. If $\lim_{n\to \infty} \lambda_n(\mu)>0$ then

\begin{enumerate}
\item $$
\Vert \mathcal{A}(\mu) \Vert= \Vert \mathcal{A}(\nu) \Vert = \Vert \mathcal{A}(\nu_a) \Vert.
$$
\item
$$
\mathbf{A}(\nu)[0,0]= \mathbf{A}(\nu_a)[0,0].
$$
\end{enumerate}

\end{prop}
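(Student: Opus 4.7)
The two statements split naturally, and I would reduce each to results already established in the paper.

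For part (1), I would first observe that by Corollary \ref{coro6},
\[
\lim_{n\to \infty} \lambda_n(\mu) \;=\; \lim_{n\to \infty} \lambda_n(\nu) \;=\; \lim_{n\to \infty} \lambda_n(\nu_a) \;=:\; \lambda,
\]
and the hypothesis guarantees $\lambda>0$. Lemma \ref{lem2} then yields that each of $\mathbf{A}(\mu)$, $\mathbf{A}(\nu)$, $\mathbf{A}(\nu_a)$ exists and defines a bounded operator on $\ell_2$ of norm $\lambda^{-1}$, so the three norms coincide.

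For part (2), the plan is to apply Theorem \ref{teorem1} to the HPD Toeplitz matrices $\mathbf{T}(\nu)$ and $\mathbf{T}(\nu_a)$. Since $\lim\lambda_n(\nu)>0$ and $\lim\lambda_n(\nu_a)>0$ (again by Corollary \ref{coro6}), Lemma \ref{lem2} guarantees that both $\mathbf{A}(\nu)$ and $\mathbf{A}(\nu_a)$ exist, so the hypotheses of Theorem \ref{teorem1} are met. Specialising item (2) of that theorem to $i=k=0$ gives
\[
\mathbf{A}(\nu)[0,0] \;=\; \beta_0(\nu)^2,\qquad \mathbf{A}(\nu_a)[0,0] \;=\; \beta_0(\nu_a)^2,
\]
where $\beta_0(\cdot):=\lim_{n\to \infty} b_{n,n}(\cdot)$ is the asymptotic leading coefficient of the orthonormal polynomials (existence ensured by Theorem \ref{teorem1}(1)). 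The task is therefore reduced to proving $\beta_0(\nu)=\beta_0(\nu_a)$.

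The final step will invoke Szeg\"{o}'s classical theorem for orthogonal polynomials on $\mathbb{T}$. Writing the absolutely continuous part as $d\nu_a=\frac{w(\theta)}{2\pi}d\theta$, the assumption $\lim\lambda_n(\nu_a)>0$ forces $\essinf w>0$ by \cite{Grenander-Szego}, so in particular $\nu$ lies in the Szeg\"{o} class. Szeg\"{o}'s theorem then identifies
\[
\lim_{n\to \infty} \|\Phi_n^{(\nu)}\|_\nu^2 \;=\; \exp\!\Bigl(\tfrac{1}{2\pi}\int_0^{2\pi}\log w(\theta)\,d\theta\Bigr),
\]
where $\Phi_n^{(\nu)}$ is the monic orthogonal polynomial of degree $n$ and $b_{n,n}(\nu)=\|\Phi_n^{(\nu)}\|_\nu^{-1}$. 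The crucial feature is that the right-hand side depends \emph{only} on the density $w$ of $\nu_a$; applying the same formula to $\nu_a$ gives the same limit, so $\beta_0(\nu)=\beta_0(\nu_a)$ and (2) follows. The main obstacle is precisely this last step: although the expansions $\mathbf{A}(\nu)[0,0]=\sum_k|P_k(0;\nu)|^2$ and $\mathbf{A}(\nu_a)[0,0]=\sum_k|P_k(0;\nu_a)|^2$ are clearly different term by term, the persymmetry of Toeplitz inverses exploited in Theorem \ref{teorem1} recasts each as $\beta_0^2$, and one then relies on Szeg\"{o}'s theorem to conclude that this asymptotic is insensitive to the singular part of $\nu$.
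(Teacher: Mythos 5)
Your proposal is correct and follows essentially the same route as the paper: part (1) via Corollary \ref{coro6} combined with Lemma \ref{lem2} giving norm $\lambda^{-1}$ for all three operators, and part (2) by identifying $\mathbf{A}(\cdot)[0,0]$ with $\beta_0(\cdot)^2=\lim_n b_{n,n}^2(\cdot)$ through the persymmetry argument of Theorem \ref{teorem1} and then invoking Szeg\"{o}'s theorem that $\lim_n\Vert\Phi_n\Vert^2$ depends only on the absolutely continuous part. The only cosmetic difference is that you spell out the Szeg\"{o} integral formula explicitly where the paper simply cites ``Szeg\"{o} theory.''
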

\begin{proof} By Corollary \ref{coro6} we have that
$$
\lim_{n\to \infty} \lambda_n(\mu)= \lim_{n\to \infty} \lambda_n(\nu)=\lim_{n\to \infty} \lambda_n(\nu_a),
$$

 and consequently by Lemma \ref{lem2}  the first part holds.
To prove the second part, by using Szeg\"{o} theory and denoting by $\Phi_n(z; \nu)$ the monic orthogonal polynomials associated to
a measure $\nu$ we have:
$$
\lim_{n\to \infty} b_{n,n}^{2}(\nu)= \lim_{n\to \infty} \dfrac{1}{\Vert \Phi_n(z; \nu) \Vert^2}=
\lim_{n\to \infty} \dfrac{1}{\Vert \Phi_n(z; \nu_a) \Vert^2}=\lim_{n\to \infty} b_{n,n}^{2}(\nu_a)
$$
 and therefore using the same arguments of Theorem \ref{teorem1}
$$
\mathcal{A}(\nu)[0,0]= \lim_{n\to \infty} b_{n,n}^{2}(\nu)=\lim_{n\to \infty} b_{n,n}^{2}(\nu_a)=\mathcal{A}(\nu_a)[0,0].
$$
\end{proof}

 The above proposition and remark $3$ suggest to us the following problem:

\noindent
 {\bf Problem 2.} Let $\nu$ be a measure with support on $\mathbb{T}$
verifying $\lim_{n\to \infty} \lambda_n(\nu)>0$, is it true that $\mathbf{A}(\nu)= \mathbf{A}(\nu_a)$? We point out that
in Example $3$ it is true that  $\mathbf{A}(\nu)= \mathbf{A}(\nu_a) = 2\mathbf{I}$.

 We have the following partial result concerning this problem:

\begin{corollary} Let $\nu$ be a positive measure with support on $\mathbb{T}$ verifying
$\lim_{n\to \infty} \lambda_n(\mu)>0$ and $\nu= \nu_a+\nu_s$ the Lebesgue decomposition of $\nu$. Assume that for any  $k \in \NN_0$
$$
\Lim(\mathbf{B}(\nu))= \Lim(\mathbf{B}(\nu_a)),
$$
 then $\mathbf{A}(\nu)= \mathbf{A}(\nu_a)$.
\end{corollary}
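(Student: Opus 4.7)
The plan is to combine Theorem \ref{teorem1} with Theorem 2 of this section. Part (2) of Theorem \ref{teorem1} expresses every entry of $\mathbf{A}(\nu)$ as an explicit finite combination of the asymptotic diagonal coefficients $\beta_k(\nu):=\lim_{n\to\infty} b_{n-k,n}(\nu)$, and the hypothesis says precisely that these coefficients coincide with the analogous $\beta_k(\nu_a)$. So once we verify that Theorem \ref{teorem1} applies on both sides, the conclusion follows by a term-by-term comparison of closed-form expressions.

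The first step is to guarantee the existence of $\mathbf{A}(\nu_a)$. By Theorem 2 of this section we know that $\lim_{n\to\infty} \lambda_n(\nu_a)=\lim_{n\to\infty} \lambda_n(\nu)>0$, and Lemma \ref{lem2} then asserts that $\mathbf{A}(\nu_a)=\overline{\mathbf{B}(\nu_a)}\mathbf{B}(\nu_a)^{t}$ is well defined and represents a bounded operator on $\ell_2$. The same reasoning applied to $\nu$ itself provides the existence of $\mathbf{A}(\nu)$, so both matrices sit in the framework of Theorem \ref{teorem1}.

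With $\mathbf{A}(\nu)$ and $\mathbf{A}(\nu_a)$ both available, Theorem \ref{teorem1} applies to the HPD Toeplitz matrices $\mathbf{T}(\nu)$ and $\mathbf{T}(\nu_a)$: both $\mathbf{B}(\nu)$ and $\mathbf{B}(\nu_a)$ are weakly asymptotic Toeplitz, and for each $0\leq i\leq k$ the formula in Theorem \ref{teorem1}(2) gives
$$
a_{i,k}(\nu)=\sum_{j=0}^{i}\beta_j(\nu)\,\beta_{k-j}(\nu),\qquad a_{i,k}(\nu_a)=\sum_{j=0}^{i}\beta_j(\nu_a)\,\beta_{k-j}(\nu_a).
$$
The hypothesis $\Lim(\mathbf{B}(\nu))=\Lim(\mathbf{B}(\nu_a))$ is the statement $\beta_k(\nu)=\beta_k(\nu_a)$ for every $k\geq 0$, so the right-hand sides agree term by term and therefore $a_{i,k}(\nu)=a_{i,k}(\nu_a)$ whenever $i\leq k$. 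Because both matrices are Hermitian (each of the form $\overline{\mathbf{B}}\mathbf{B}^{t}$), the equality extends to $i>k$ by conjugation, and we conclude $\mathbf{A}(\nu)=\mathbf{A}(\nu_a)$.

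The only genuine content required is the pair of external inputs already in place — Theorem 2, which transfers the spectral positivity from $\nu$ to $\nu_a$, and Theorem \ref{teorem1}(2), which reduces the matrix entries to algebraic expressions in the $\beta_k$'s. After those, the argument is pure bookkeeping; this is precisely why the statement is recorded as a corollary and there is no substantive obstacle to clear.
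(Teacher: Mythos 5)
Your proof is correct and follows essentially the same route as the paper: invoke Theorem 2 to transfer $\lim_{n\to\infty}\lambda_n(\nu)>0$ to $\nu_a$, so that by Lemma \ref{lem2} both $\mathbf{A}(\nu)$ and $\mathbf{A}(\nu_a)$ exist, and then use Theorem \ref{teorem1} to express their entries as the same algebraic combinations of the coefficients $\beta_k$, which coincide by hypothesis. The extra detail you supply (extending the entrywise equality from $i\leq k$ to $i>k$ via Hermitian symmetry) is implicit in the paper's one-line conclusion but is the same argument.
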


\begin{proof}
 By theorem $2$ it follows that $\lim_{n\to \infty} \lambda_n(\nu_a)>0$ and consequently both  matrices $\mathbf{A}(\nu),\mathbf{A}(\nu_a)$ exist and can be described in terms of
$\beta_k(\nu), \beta_k(\nu_a)$. Since by the assumptions
$$ \lim_{n\to \infty} b_{k,n}(\nu)= \beta_{k}(\nu)=\beta_{k}(\nu_a)= \lim_{n\to \infty} b_{k,n}(\nu_a)
$$

 the conclusion follows.
\end{proof}

 Using corollary above Problem $2$ can be reformulated in the following way:

\noindent
 {\bf Problem $2^{*}$
} Let $\nu$ be a measure with support on $\mathbb{T}$
verifying $\lim_{n\to \infty} \lambda_n(\nu)>0$, is it true that for every $k\in \NN$
$$
\lim_{n\to \infty} b_{n-k,n}(\nu) = \lim_{n\to \infty} b_{n-k,n}(\nu_a) ?
$$
\begin{remark} Note that for $k=0$ the equality  $
\lim_{n \to \infty} b_{n,n}(\nu)= \lim_{n\to \infty} b_{n}(\nu_a)$ is a consequence of Szeg\"{o} theory.
\end{remark}

\today

\end{document}